\newtheorem{theorem}{Theorem}[section]
\newtheorem{lemma}[theorem]{Lemma}
\theoremstyle{definition}
\newtheorem{example}[theorem]{Example}
\newtheorem{remark}[theorem]{Remark}
\newtheorem{corollary}[theorem]{Corollary}
\def \N{{\mathbb N}}
\def \H{{\mathbb H}}
\def \E{{\mathbb E}}
\def \R{{\mathbb R}}
\newcommand{\vn}{\mathop{\mathrm{int}}\nolimits}
\newcommand{\rel}{\mathop{\mathrm{rel\:int}}\nolimits}
\title{\huge Reflection subgroups of Coxeter groups}
\author{Anna Felikson\thanks{Partially supported by grants NSh-5666.2006.1, INTAS YSF-06-10000014-5916, and  RFBR 07-01-00390-a.} 
\qquad\qquad Pavel Tumarkin\thanks{Partially supported by grants MK-6290.2006.1, NSh-5666.2006.1, INTAS YSF-06-10000014-5766, and RFBR  07-01-00390-a}\\[-0.5ex]
\small \texttt{felikson@mccme.ru}$\:$\qquad\qquad\qquad \texttt{pasha@mccme.ru}$\;\;$\qquad\\[0.9ex] 
\small Independent University of Moscow\\[-0.8ex] 
\small B. Vlassievskii 11, 119002 Moscow, Russia\\[0.8ex] 
\small Department of Mathematics, University of Fribourg\\[-0.8ex] 
\small P\'erolles, Chemin du Mus\'ee 23, CH-1700 Fribourg, Switzerland
}
\date{\small Mathematics Subject Classification: 20F55, 51M20, 51F15.}
\begin{document}

\maketitle

\begin{abstract}
We use geometry of Davis complex of a Coxeter group to investigate finite index reflection subgroups of Coxeter groups.
The main result is the following: if $G$ is an infinite indecomposable Coxeter group and $H\subset G$ is a finite index reflection
subgroup  then the rank of $H$ is not less than the rank of $G$. 
This generalizes results of~\cite{fund}. We also describe some properties of the nerves of the group and the 
subgroup in the case of equal ranks. 

\end{abstract}

\maketitle

\section{Introduction}

In~\cite{Da} M.~Davis constructed for any Coxeter system $\left(G,S\right)$
a contractible piecewise Euclidean complex, on which $G$ acts properly and cocompactly 
by reflections. In this paper, we use this complex to study finite index reflection 
subgroups of infinite indecomposable Coxeter groups from geometrical point of view.
We define convex polytopes in the complex to prove the following result:

\begin{theorem}
Let $\left(G,S\right)$ be a Coxeter system, where $G$ is infinite and indecomposable, and $S$ is finite.
If $P$ is a compact polytope in $\Sigma$,
then the number of facets of $P$ is not less than $|S|$. 

\end{theorem}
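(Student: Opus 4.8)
\emph{The plan is} to reduce the assertion to a combinatorial statement about finite sets of chambers of $\Sigma$, and then to prove that statement by induction on $|S|$, the inductive step passing to parabolic subgroups and the one genuinely hard case being where indecomposability is used. Write $P=\bigcap_{i=1}^{k}H_i^{-}$, where $H_1,\dots,H_k$ are the walls carrying the facets $F_i=P\cap H_i$ (so, after deleting redundant half‑spaces, $k$ equals the number of facets). The first thing I would check is a local observation: if $C$ is a chamber of $\Sigma$ with $C\subseteq P$ and $w$ is a wall of $C$ with $w\notin\{H_1,\dots,H_k\}$, then the chamber $C'$ adjacent to $C$ across $w$ also lies in $P$ — indeed $C$ and $C'$ are separated only by $w$, hence lie on the same side of every $H_i$, and $C\subseteq H_i^{-}$ forces $C'\subseteq H_i^{-}$ for all $i$. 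Consequently the set $\mathcal C$ of chambers contained in $P$ is closed under crossing walls other than the $H_i$; since $\mathrm{int}\,P$ is geodesically convex, $\mathcal C$ is connected through such crossings, and since $P$ is compact, $\mathcal C$ is finite. Moreover each $H_i$ is a wall of some chamber of $\mathcal C$ (take a relative interior point of $F_i$ lying on no $H_j$ with $j\ne i$, and the chamber on its $H_i^{-}$–side), and conversely the walls through which one leaves $\mathcal C$ are exactly $H_1,\dots,H_k$. So the theorem is equivalent to: \emph{the walls bounding a finite connected union of chambers of $\Sigma$ number at least $|S|$.}

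After translating by an element of $G$ I may assume the base chamber $K$ lies in $\mathcal C$. Let $S_{\mathrm{in}}\subseteq S$ be the set of $s$ such that some internal crossing of $\mathcal C$ has type $s$ (that is, $wK,wsK\in\mathcal C$ for some $w$). Following only internal crossings out of $K$ keeps us inside the parabolic $G_{S_{\mathrm{in}}}$, hence $\mathcal C\subseteq G_{S_{\mathrm{in}}}\cdot K$, so $P$ lies in the residue of $K$ modulo $G_{S_{\mathrm{in}}}$. For every $s\in S\setminus S_{\mathrm{in}}$ the wall $H_s$ bounds $\mathcal C$, and these $|S\setminus S_{\mathrm{in}}|$ walls are pairwise distinct and distinct from every bounding wall lying in $G_{S_{\mathrm{in}}}$; thus it is enough to show that the bounding walls of $\mathcal C$ lying in $G_{S_{\mathrm{in}}}$ number at least $|S_{\mathrm{in}}|$. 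If $S_{\mathrm{in}}\subsetneq S$ this is exactly the statement of the theorem for the compact polytope $P$ regarded inside the Davis complex of the lower–rank parabolic $(G_{S_{\mathrm{in}}},S_{\mathrm{in}})$ — for this I would use that an internal facet of $P$ meeting the residue restricts to a facet there, and that the relevant wall groups are again Davis complexes — and we conclude by induction on $|S|$, \emph{provided} $G_{S_{\mathrm{in}}}$ is infinite. The two exceptional situations, namely $G_{S_{\mathrm{in}}}$ finite (so $P$ fills up a finite residue) and $S_{\mathrm{in}}=S$, I would treat separately: the first by a direct count inside the finite residue, the second by the argument of the next paragraph.

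The hard part is the case $S_{\mathrm{in}}=S$ with $\mathcal C$ not a single chamber: every crossing–direction occurs internally, yet we must show the bounding walls cannot all collapse into fewer than $|S|$ distinct ones. This is precisely where indecomposability is essential: for a decomposable group like $D_\infty\times A_1$ the statement is false — the $A_1$–direction is closed off by a single wall and one gets a compact polytope, a prism, with only two facets — the defect being that $A_1$ is a finite direct factor whose direction collapses. To rule this out I would argue by contradiction: if $k<|S|$, then the reflection subgroup $W'=\langle r_1,\dots,r_k\rangle$ generated by the reflections in the $H_i$ has finite index in $G$ (since $P$, being cut out by walls of the $W'$–arrangement, is a union of closed fundamental chambers of $W'$, so $W'\cdot P=\Sigma$ and $W'$ acts cocompactly), and then the $|S|$ walls of any chamber of $\mathcal C$ split into at most $k$ facet–walls and at least $|S|-k\ge 1$ internal walls; bookkeeping how these internal crossings propagate through $\mathcal C$ — using that the Coxeter diagram is connected to prevent the facet–walls from collapsing — should produce an unbounded gallery inside $P$, contradicting compactness. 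I expect this last combinatorial step, controlling the propagation of internal crossings in the irreducible case, to be the main technical obstacle, and the point at which connectedness of the Coxeter diagram enters in an essential way.
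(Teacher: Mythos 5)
Your reduction to counting the bounding walls of a finite connected union of chambers is sound, and the observation that the reflection subgroup $W'$ generated by the reflections in the $k$ bounding walls acts cocompactly (hence has finite index in $G$) is correct. But the argument stops exactly where the theorem begins. Knowing that $W'$ has finite index and is generated by $k$ reflections is precisely the hypothesis of Theorem~\ref{gen}, which is deduced \emph{from} Theorem~\ref{facets}; to derive a contradiction from $k<|S|$ you would need to already know that a finite-index reflection subgroup of an infinite indecomposable $G$ cannot be generated by fewer than $|S|$ reflections --- which is the statement being proved. The decisive step of your plan (``bookkeeping how these internal crossings propagate \dots should produce an unbounded gallery inside $P$'') is the entire content of the theorem in the irreducible case, and you explicitly leave it as an expected obstacle rather than proving it. A secondary gap: your induction on $|S|$ passes to the parabolic $G_{S_{\mathrm{in}}}$, which may be infinite but \emph{decomposable}, and for decomposable groups the statement is false (Remark~\ref{nes} exhibits $|S|=3$ with a compact two-facet polytope), so the inductive step needs a refined statement in the spirit of Corollary~\ref{dec} that you do not formulate or prove.

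For contrast, the paper's mechanism is different and supplies exactly the missing ingredient. It inducts on the number of facets $k$ rather than on $|S|$: one takes a polytope $P_{\mathrm{min}}$ with $k$ facets minimal under inclusion, then a polytope $P'_{\mathrm{min}}$ with $k+1$ facets inside it sharing all but one facet with $P_{\mathrm{min}}$, again minimal. Minimality forces both to have no decomposed dihedral angles, hence to be Coxeter polytopes with $\Gamma_{P_{\mathrm{min}}}$ a \emph{proper standard} subgroup of $\Gamma_{P'_{\mathrm{min}}}$; the induction hypothesis on $k$ shows every indecomposable component of $\Gamma_{P_{\mathrm{min}}}$ is infinite; and Deodhar's theorem (Lemma~\ref{subgr}: a proper standard subgroup of an infinite indecomposable Coxeter group has infinite index) then forces infinitely many copies of $P'_{\mathrm{min}}$ inside the compact $P_{\mathrm{min}}$, a contradiction. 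Deodhar's theorem is the external input that converts indecomposability and infiniteness into the contradiction with compactness; your sketch contains no substitute for it, so the proposal as written does not constitute a proof.
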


This generalizes results of~\cite{fund}, where the similar result was proved for fundamental polytopes of
finite index subgroups of cocompact (or finite covolume) groups generated by reflections in hyperbolic and Euclidean spaces.
M.~Dyer~\cite{D} proved that any reflection subgroup of a Coxeter group is also a Coxeter group. 
Using Theorem~\ref{facets} and criterium for finiteness of a Coxeter group provided by V.~Deodhar~\cite{De}, 
we obtain the main result of this paper: 

\begin{theorem}
Let $\left(G,S\right)$ be a Coxeter system, where
$G$ is infinite and indecomposable, and $S$ is finite.
Let $H\subset G$ be a finite index reflection subgroup.
Then any set of reflections generating $H$ contains at least $|S|$ elements.

\end{theorem}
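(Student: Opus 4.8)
The plan is to translate the statement into one about a polytope in the Davis complex $\Sigma$ of $(G,S)$ and then apply Theorem~\ref{facets}. Fix the fundamental chamber $K$ of $(G,S)$ in $\Sigma$; since $S$ is finite, $K$ is compact, $G$ acts on $\Sigma$ simply transitively on the chambers, and each wall of $\Sigma$ cuts $\Sigma$ into two convex half-spaces.

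Let $H\subseteq G$ be of finite index and generated by reflections. A finite-index subgroup of a finitely generated group is finitely generated, so by Dyer~\cite{D} the group $H$ is a Coxeter group with a canonical (necessarily finite) set $R_0$ of generating reflections, and every reflection of $H$ is an $H$-conjugate of an element of $R_0$. Let $P$ be the closure of the component of the complement in $\Sigma$ of all walls of reflections of $H$ whose closure contains $K$; that is, $P$ is the closure of the fundamental chamber of $H$ acting on $\Sigma$. First I would record three properties of $P$: it is the union of the $[G:H]$ translates $gK$ lying in that component, hence a finite union of compact chambers and therefore compact; it is convex, being an intersection of convex half-spaces of $\Sigma$; and its codimension-one faces lie precisely in the walls of the elements of $R_0$, each such wall meeting the convex set $P$ in a connected piece and so contributing exactly one facet. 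Thus $P$ is a compact polytope whose number of facets is exactly $|R_0|$, and Theorem~\ref{facets} yields $|R_0|\ge |S|$.

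It remains to bound below an arbitrary set $R$ of reflections generating $H$ by $|R_0|$. For this I would pass to the geometric representation of the Coxeter system $(H,R_0)$ on $\R^{|R_0|}$, in which a reflection $t\in H$ acts as $v\mapsto v-2\langle v,\beta_t\rangle\beta_t$ for its root $\beta_t$. If $U$ denotes the span of $\{\beta_t : t\in R\}$, then every $t\in R$, and hence all of $H=\langle R\rangle$, acts trivially on $\R^{|R_0|}/U$; applying this to a simple generator $r\in R_0$ gives $-2\alpha_r\in U$, so as $r$ ranges over $R_0$ every simple root lies in $U$ and $U=\R^{|R_0|}$, whence $|R|\ge |R_0|$. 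This spanning step is where one invokes the structure of the root system of a Coxeter group and the finiteness criterion of Deodhar~\cite{De}. Combining the two inequalities gives $|R|\ge |R_0|\ge |S|$, as desired. Taking Theorem~\ref{facets} as given, the main obstacle is the verification in the second paragraph: that the fundamental domain of the reflection subgroup $H$ inside $\Sigma$ is an honest compact convex polytope whose facets biject with $R_0$. This rests on the convexity of half-spaces in the Davis complex, on the fact that $[G:H]<\infty$ forces $P$ to be a finite union of compact chambers, and on Dyer's description of the walls bounding a chamber of a reflection subgroup; the passage from $R_0$ to an arbitrary reflection generating set, by contrast, is a short representation-theoretic argument.
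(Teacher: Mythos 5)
Your proposal is correct and follows essentially the same route as the paper: take the fundamental chamber of $H$ in $\Sigma$, observe it is a compact polytope whose facets correspond to the canonical generators, apply Theorem~\ref{facets} to get $|S'|\ge|S|$, and then reduce an arbitrary generating set of reflections to the canonical one by the spanning argument in the Tits representation (which is exactly the paper's Lemma~\ref{generators}, due to Howlett). The only slip is attributing that last spanning step to Deodhar's finiteness criterion --- Deodhar enters only inside the proof of Theorem~\ref{facets}, not here --- but this does not affect the correctness of the argument.
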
   
 
Further, we consider geometry of Davis complex itself. 
E.~M.~Andreev~\cite{Andr} obtained the following result for polytopes in hyperbolic and Euclidean spaces:

\begin{theorem}[Andreev~\cite{Andr}]
Let $P$ be  an acute-angled polytope in $\E^n$ or $\H^n$,
and let $a$ and $b$ be two faces of $P$. 
If $a\cap b=\emptyset$ then
the minimal planes containing, respectively, $a$ and $b$ are disjoint either. 

\end{theorem}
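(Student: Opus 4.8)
My plan is to prove the contrapositive: assuming the minimal planes $\langle a\rangle$ and $\langle b\rangle$ have a common point, I will produce a point in $a\cap b$. The first step is a reduction to a statement about facets only. Writing $I_c$ for the set of facets of $P$ containing a face $c$, and $H_F$ for the supporting hyperplane of a facet $F$, one has $\langle c\rangle=\bigcap_{F\in I_c}H_F$ for every face $c$ (a face is cut out of its affine hull precisely by the supporting hyperplanes of the facets through it). Hence, setting $\mathcal F=I_a\cup I_b$, we get $\langle a\rangle\cap\langle b\rangle=\bigcap_{F\in\mathcal F}H_F$ and $a\cap b=P\cap\bigcap_{F\in\mathcal F}H_F$. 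We may assume $a$ and $b$ are non-empty proper faces (otherwise the statement is vacuous or trivial), so that $\mathcal F\neq\emptyset$. Thus it is enough to show: if a family $\mathcal F$ of supporting hyperplanes of facets of an acute-angled polytope $P$ has a common point, then it has a common point lying in $P$. In the hyperbolic case I pass to the hyperboloid model $\H^n\subset\R^{n,1}$ and represent each facet $F$ by its outward unit normal $e_F$, a spacelike vector with $\langle e_F,e_F\rangle=1$ and corresponding halfspace $\{x:\langle x,e_F\rangle\le 0\}$; I use the standard fact that for an acute-angled polytope $\langle e_F,e_G\rangle\le 0$ for any two distinct facets $F,G$, which is the analytic content of ``acute-angled''. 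The Euclidean case is the formally identical statement for normals in $\R^n$ (now with affine halfspaces $\{x:\langle x,e_F\rangle\le\beta_F\}$).

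The heart of the matter is a nearest-point projection. Pick $x_0\in\bigcap_{F\in\mathcal F}H_F$; if $x_0\in P$ we are done, so assume not, and let $y$ be the unique nearest point of $P$ to $x_0$. I claim that $y\in H_F$ for every $F\in\mathcal F$; granting this, $y\in P\cap\bigcap_{F\in\mathcal F}H_F=a\cap b$ and we are done. Suppose instead $y\notin H_{F_0}$ for some $F_0\in\mathcal F$, and let $\mathcal G$ be the set of facets whose supporting hyperplanes pass through $y$; then $F_0\notin\mathcal G$. By the variational characterization of nearest-point projection onto a convex set in $\E^n$ or $\H^n$, the initial velocity $v$ at $y$ of the geodesic $[y,x_0]$ lies in the normal cone of $P$ at $y$, which is $\operatorname{cone}\{e_G:G\in\mathcal G\}$; so $v=\sum_{G\in\mathcal G}\lambda_G e_G$ with all $\lambda_G\ge 0$ (in the hyperbolic case everything is read in $T_y\H^n=y^\perp$, where $e_G$ already lies since $y\in H_G$). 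Now evaluate the pairing of $v$ with $e_{F_0}$ in two ways. On one side it equals $\sum_{G\in\mathcal G}\lambda_G\langle e_G,e_{F_0}\rangle\le 0$, because each $G\in\mathcal G$ is distinct from $F_0$. On the other side it is, up to a positive factor, the derivative at $t=0$ of $f(t)=\langle\gamma(t),e_{F_0}\rangle$ along the geodesic $\gamma$ from $y$ to $x_0$; since $y$ lies strictly on the interior side of $H_{F_0}$, $f$ takes at $y$ a value strictly below the one defining $H_{F_0}$, while it attains that value at $x_0$, and since $f$ is affine (Euclidean case) or of the shape $c_1\cosh t+c_2\sinh t$ (hyperbolic case), this forces $f'(0)>0$. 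This contradiction proves the claim.

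The step I expect to be the real obstacle is making the hyperbolic case rigorous: one must interpret the normal cone of $P$ and the inequality $\langle e_G,e_{F_0}\rangle\le 0$ at a point $y$ which need not lie on $H_{F_0}\cap H_G$. The remedy is the computation in the hyperboloid model: for $y\in H_G$ one has $\langle e_G,y\rangle=0$, so $e_G$ already lies in $T_y\H^n=y^\perp$ and serves there as the outward normal of $H_G$, and its Riemannian pairing at $y$ with (the tangent part of) $e_{F_0}$ is exactly $\langle e_G,e_{F_0}\rangle\le 0$; the same computation shows the pairing of $v$ with $e_{F_0}$ is unchanged if $e_{F_0}$ is replaced by its projection to $y^\perp$. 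With this in hand the Euclidean and hyperbolic arguments coincide line for line. Apart from this I would also dispose of the routine boundary cases: $a$ or $b$ empty or equal to $P$; $\mathcal F=\emptyset$; and, in $\H^n$, the possibility that $\langle a\rangle\cap\langle b\rangle$ meets $\H^n$ in no point at all (meeting only at infinity, or not at all), in which case the two planes are already disjoint in $\H^n$ and there is nothing to prove.
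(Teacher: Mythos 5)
There is a genuine gap, and it sits exactly where you flag the ``standard fact'': the inequality $\langle e_F,e_G\rangle\le 0$ for \emph{any} two distinct facets is not the analytic content of ``acute-angled''. Acute-angledness is a condition on dihedral angles, which are defined only at codimension-two faces, so it gives $\langle e_F,e_G\rangle=-\cos\angle(H_F,H_G)\le 0$ only for \emph{adjacent} facets. For non-adjacent facets the sign of $\langle e_F,e_G\rangle$ is a priori unconstrained: if $H_F\cap H_G\ne\emptyset$ but $F\cap G=\emptyset$, the sector $H_F^-\cap H_G^-$ containing $P$ could perfectly well have angle greater than $\pi/2$ at the (extraneous) intersection, giving $\langle e_F,e_G\rangle>0$. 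Ruling this out is precisely the facet case of Andreev's theorem, which is a special case of the statement you are proving (indeed, once one knows non-adjacent facets satisfy $\langle e_F,e_G\rangle\le 0$, your nearest-point argument shows their hyperplanes are disjoint, so the Gram sign condition, the facet case, and the general statement are all equivalent). Your normal-cone computation needs the inequality for pairs $(G,F_0)$ with $G$ an arbitrary facet through the projection point $y$ and $F_0\in\mathcal F$ a facet not through $y$; these are exactly the potentially non-adjacent pairs, so the argument is circular. The rest of the proposal is sound: the reduction to facet hyperplanes (a face is the intersection of $P$ with the hyperplanes of the facets containing it), the first-order characterization of the nearest-point projection onto a convex set in a Hadamard manifold, and the convexity computation $f(t)=c_1\cosh t+c_2\sinh t$ forcing $f'(0)>0$ are all correct. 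What is missing is the actual content: the classical route (Andreev, and Vinberg's survey on hyperbolic reflection groups) establishes the sign condition by an induction on dimension, using that every face of an acute-angled polytope is itself acute-angled with dihedral angles not exceeding those of $P$. Without that induction, or some substitute, your proof establishes only ``Gram sign condition $\Rightarrow$ Andreev'', not Andreev.

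For calibration: the paper does not prove this theorem at all --- it is quoted from Andreev as background --- so there is no author proof to match. The closest internal analogue is Lemma~\ref{andr for facets}, the facet case in the Davis complex, which is proved by an entirely different mechanism: alternately reflecting $P$ in the two walls, showing by Lemma~\ref{stacan} that the union stays a convex polytope with pairwise non-overlapping copies, and contradicting the finiteness of the dihedral group generated by the two reflections. That argument exploits the fact that intersecting walls in $\Sigma$ automatically generate a finite group, which has no counterpart in $\H^n$ (two hyperplanes meeting at an irrational angle generate an infinite group), so it does not transfer to the classical setting and cannot be used to patch your proof.
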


In Section~\ref{andr}, we define acute-angled polytopes in Davis complex and prove a counterpart of 
Andreev's theorem for non-intersecting facets of acute-angled polytopes (see Lemma~\ref{andr for facets}). 

\medskip

In Section~\ref{nn}, we focus on the case when a Coxeter group and its finite index reflection 
subgroup have the same rank, i.e. the rank of subgroup is minimum possible. 
We show that the nerve of the subgroup can be obtained by deleting some simplices from the nerve of
the group (Lemma~\ref{nerve}), and provide some necessary conditions for combinatorics of Coxeter group having a finite 
index reflection subgroup of the same rank (Lemma~\ref{comm}).

\medskip

We would like to thank E.~B.~Vinberg for useful comments and discussions, and
R.~B.~Howlett for communicating a proof of Lemma~\ref{generators}.
The work was mainly done during our stay at the University of Fribourg, Switzerland. 
We are grateful to the University for hospitality.

\section{Preliminaries}

\subsection{Coxeter systems}

A {\it Coxeter group} is a group with presentation
$$
\langle S \ | \  \left(s_is_j\right)^{m_{ij}}=1  \rangle
$$
for all $s_i,s_j\in S$, 
where $m_{ii}=1$ and $m_{ij}\in \{2,3,\dots,\infty\}$ for $i\ne j$. 
We further require $S$ to be finite.
$S$ is called a {\it standard generating set}.
A pair $\left(G,S\right)$ is called a {\it Coxeter system}.

If $S$ is fixed, $G$ is called
{\it decomposable} if $S=S_1\cup S_2$, 
where $S_1$ and $S_2$ are non-empty subsets such that $s_is_j=s_js_i$
for all $s_i\in S_1$, $s_j\in S_2$.
If  $G$ is not decomposable, it is called {\it indecomposable}.

An element $s\in G$ is called a {\it reflection} if it is conjugate to some of $s_i\in S$
(in particular, any  $s_i\in S$ is a reflection). A subgroup $H\subset G$ is called {\it reflection subgroup} of $G$ if $H$ is 
generated by reflections.

For any $T\subset S$ a reflection subgroup $G_T$ generated by all $s_i\in T$ is called a
{\it standard} subgroup of $G$. 

The {\it rank} of a Coxeter system $\left(G,S\right)$ is the number of reflections in $S$.
We denote it by $|S|$.

The proof of the following lemma is suggested by R.~B.~Howlett.

\begin{lemma}
\label{generators}
Let $\left(G,S\right)$ be a Coxeter system of rank $n$.
Then $G$ cannot be generated by 
less than $n$ reflections. 

\end{lemma}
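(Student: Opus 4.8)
The plan is to prove the contrapositive via a linear-algebraic (geometric representation) argument. Recall that $G$ acts on the real vector space $V$ with basis $\{e_s : s \in S\}$, equipped with the symmetric bilinear form $B$ defined by $B(e_s,e_t) = -\cos(\pi/m_{st})$; each reflection $s\in S$ acts as the orthogonal reflection $\sigma_s(v) = v - 2B(v,e_s)e_s$, and more generally every reflection $r\in G$ (being conjugate to some $s\in S$) acts on $V$ as an orthogonal reflection $\sigma_r(v) = v - 2B(v,\alpha_r)\alpha_r$ in a hyperplane $H_r = \{v : B(v,\alpha_r)=0\}$ for a suitable "root" vector $\alpha_r$ with $B(\alpha_r,\alpha_r)=1$. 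The key point I would isolate first: for any reflection $r$, the fixed subspace of $\sigma_r$ on $V$ is the hyperplane $H_r$, so $\mathrm{im}(\sigma_r - \mathrm{id})$ is the one-dimensional line spanned by $\alpha_r$.

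The main step is then the following observation. Suppose $G$ is generated by reflections $r_1,\dots,r_k$ with corresponding roots $\alpha_1,\dots,\alpha_k$. Let $U = \mathrm{span}(\alpha_1,\dots,\alpha_k) \subseteq V$, so $\dim U \le k$. I claim $U = V$. Indeed, the orthogonal complement $U^{\perp_B} = \{v\in V : B(v,\alpha_i)=0 \text{ for all } i\}$ is fixed pointwise by every generator $\sigma_{r_i}$, hence fixed pointwise by all of $G$. But the geometric representation of a Coxeter system $(G,S)$ has the property that the only vector fixed by all of $G$ — in fact, since $G$ permutes the roots and acts "essentially", the subspace of $G$-fixed vectors — is controlled: more precisely, one shows that $\{v \in V : \sigma_s(v) = v \text{ for all } s\in S\} = \{v : B(v,e_s)=0 \ \forall s\} = \mathrm{rad}(B) \cap (\text{something})$; the cleanest route is to note that if $v$ is $G$-fixed then $B(v, e_s) = 0$ for all $s\in S$ (apply $\sigma_s$), i.e. $v \in \mathrm{rad}(B)$, and conversely. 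Hmm — this only gives $U^{\perp_B} \subseteq \mathrm{rad}(B)$, which is not quite enough when $B$ is degenerate.

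To handle degeneracy cleanly, I would instead work with the action on $V^* $ or, better, use the standard fact (e.g. from Bourbaki, or Humphreys §5.3) that in the geometric representation the vectors $e_s$, $s\in S$, are linearly independent and the sub-root-system argument gives: the set of all roots spans $V$, and any $G$-invariant subspace either contains all $e_s$ or is contained in $\mathrm{rad}(B)$ — combined with the observation that $U$ is visibly $G$-invariant (each $\sigma_{r_i}$ maps $\alpha_j$ to $\alpha_j - 2B(\alpha_j,\alpha_i)\alpha_i \in U$) and that $U \not\subseteq \mathrm{rad}(B)$ since $B(\alpha_i,\alpha_i)=1$. Hence $U$ contains every $e_s$, so $U = V$ and $k \ge \dim V = n = |S|$. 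The main obstacle, as flagged, is pinning down the invariant-subspace dichotomy in the possibly-degenerate case; I expect to quote this from the literature on the geometric representation (Bourbaki, Ch. V, or Vinberg's theory of linear reflection groups) rather than reprove it, since the alternative — a direct combinatorial argument on words in the $r_i$ — seems considerably messier and is exactly the kind of thing a geometric proof is meant to avoid.
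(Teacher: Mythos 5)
Your overall strategy (geometric representation, show the span $U$ of the roots of the generating reflections is all of $V$) is the same as the paper's, but the key step you lean on does not work as stated, and the lemma you want to quote is not available in the generality you need. The invariant-subspace dichotomy ``any $G$-invariant subspace either contains all $e_s$ or is contained in $\mathrm{rad}(B)$'' is a theorem about \emph{irreducible} Coxeter systems (Bourbaki Ch.~V, \S 4; Humphreys \S 6.3: for irreducible $(W,S)$ every proper invariant subspace lies in the radical). It is simply false for decomposable systems: for $G=A_1\times A_1$ the form $B$ is nondegenerate and $\mathbb{R}e_{s_1}$ is a proper invariant subspace. The lemma here carries no indecomposability hypothesis, and it genuinely cannot: in the paper it is applied to the reflection subgroup $H$ with its Coxeter system $(H,S')$, which may well be decomposable even when $G$ is not. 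So as written your argument proves the lemma only for irreducible systems. It is repairable --- decompose $V$ into the $B$-orthogonal sum of the component subspaces, note that each root $\alpha_i$ lies in exactly one component (being in the $G$-orbit of some $e_s$), and apply the irreducible case to each component, using that $G$ surjects onto each factor --- but that reduction is a missing step, not a formality you can wave at.

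It is worth seeing how the paper avoids the issue entirely, with no invariant-subspace theory and no case split on degeneracy or reducibility. With $L=\mathrm{span}(v_1,\dots,v_k)$ the roots of the generating reflections, one shows by induction on word length that $g(v)\in v+L$ for every $g\in G$ and every $v\in V$ (each application of a generator $r_i$ changes $v$ by a multiple of $v_i\in L$). If $g$ is a reflection with root $v$, then $-v=g(v)\in v+L$ forces $v\in L$; applying this to the reflections $s\in S$ puts every $e_s$ in $L$, so $L=V$ and $k\ge n$. This is strictly more elementary than either of your two routes and handles the degenerate and decomposable cases for free; your flagged worry about $\mathrm{rad}(B)$ is a signal that you were reaching for heavier machinery than the problem requires.
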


\begin{proof}
Let $S=\{s_1,\dots,s_n\}$. 
Consider the Tits representation of $\left(G,S\right)$
 on a real vector $n$-space $V$ (see e.g.~\cite{B}). 
Suppose that $G$ is generated by $k<n$ reflections 
$r_1,r_2,\dots,r_k$                       
along vectors $v_1,v_2, \dots , v_k$, i.e.  
$$r_i(x)=x-2\frac{(v_i,x)}{(v_i,v_i)}v_i.$$

Let $L$ be the linear subspace spanned by $v_1,\dots,v_k$.
Let $g=r_{i_1}r_{i_2}\dots r_{i_l}$ be any element of $G$.
We prove by induction on $l$  that for any $v\in V$ \quad
$g(v) \in v + L$.
For $l=0$ the statement is evident (since $v\in v+L$).
Suppose the statement holds for all elements $g=r_{i_1}r_{i_2}\dots r_{i_{l-1}}$.
Let $g=r_{i_1}g_1$, where $g_1=r_{i_2}\dots r_{i_l}$.
Then $$g(v)=r_{i_1}g_1(v)=g_1(v)-2\frac{(v_{i_1},g_1(v))}{(v_{i_1},v_{i_1})}v_{i_1}\in 
v+L+ \lambda v_{i_1} \subset v+L$$
(where $\lambda\in \R$).
Hence, $g(v)\in v + L$ for any $g\in G$.

Suppose that $g\in G$ is a reflection along some vector $v$.
Then $-v = g(v) \in v + L$.
Therefore, $v\in L$. Hence,
by the construction of Tits representation of $\left(G,S\right)$,
$L$ should coincide with $V$, which
is impossible for a space spanned by $k<n$ vectors.

\end{proof}

\subsection{Davis complex}

For any Coxeter system $\left(G,S\right)$ there exists a contractible
piecewise Euclidean cell complex $\Sigma\left(G,S\right)$ (called {\it Davis complex}) 
on which $G$ acts discretely, properly and cocompactly. 
The construction was introduced by Davis~\cite{Da}.
In~\cite{M} Moussong proved that this complex yields
a natural complete piecewise Euclidean metric which is $CAT\left(0\right)$. 
We give a brief description of this complex following~\cite{VN}.

For a finite group $G$ the complex $\Sigma\left(G,S\right)$ is just one cell, which is obtained
as a convex hull $C$ of $G$-orbit of a suitable point $p$ in the standard linear 
representation of $G$ as a group generated by reflections. The point $p$ is chosen in such 
a way that its stabilizer in $G$ is trivial and all the edges of $C$ are of length 1.
The faces of $C$ are naturally identified with Davis complexes of the subgroups of $G$
conjugate to standard subgroups.

If $G$ is infinite, the complex $\Sigma\left(G,S\right)$ is built up of the Davis complexes of maximal 
finite subgroups of $G$ gluing together along their faces corresponding to common 
finite subgroups. The 1-skeleton of $\Sigma\left(G,S\right)$ considered as a combinatorial graph 
is isomorphic to the Cayley graph of $G$ with respect to the generating set $S$.

The action of $G$ on $\Sigma\left(G,S\right)$ is generated by reflections.
The {\it walls} in $\Sigma\left(G,S\right)$ are the fix point sets of reflections in $G$.
The intersection of a wall $\alpha$ with cells of $\Sigma\left(G,S\right)$ supply $\alpha$
with a structure of a piecewise Euclidean cell complex with finitely many isometry types of cells.
Walls are totally geodesic: 
any geodesic joining two points of $\alpha$ lies entirely  in $\alpha$. 
Since $\Sigma$ is $CAT\left(0\right)$, any two points of  $\Sigma$ may be joined by a unique geodesic. 

Any wall divides $\Sigma\left(G,S\right)$ into two connected components. All the walls decompose 
$\Sigma\left(G,S\right)$ into connected components which are compact sets called {\it chambers}. Any chamber is a 
fundamental domain of $G$-action on $\Sigma\left(G,S\right)$.  
The set of all chambers with appropriate adjacency relation is isomorphic 
to the Cayley graph of $G$ with respect to $S$.

A {\it nerve} of a Coxeter system $\left(G,S\right)$ is a simplicial complex 
with vertex set $S$. A collection of vertices span a simplex if and only if the corresponding reflections 
generate a finite group. It is easy to see that the combinatorics of chamber of $\Sigma\left(G,S\right)$ is 
completely determined by the nerve.   

For any subgroup $H\subset G$ we say that a wall $\alpha$ is a {\it mirror}
of $H$ if $H$ contains the reflection with respect to $\alpha$. 

In what follows, if $G$ and $S$ are fixed, we write $\Sigma$ instead of $\Sigma\left(G,S\right)$.

\subsection{Convex polytopes in $\Sigma$}

For any wall $\alpha$ of $\Sigma$ we denote by $\alpha^+$ and $\alpha^-$
the closures of the connected components of $\Sigma\setminus \alpha$, we call these components {\it halfspaces}.

A {\it convex polytope} $P\subset \Sigma$ is an intersection of finitely many halfspaces
$P=\bigcap\limits_{i=1}^n  \alpha_i^+$, such that $P$ is not contained in any wall.
Clearly, any convex polytope $P\subset \Sigma$ is a union of closed chambers.
$P$ is compact if and only if it contains finitely many chambers. 
Since the walls are totally geodesic, any convex polytope is convex
in usual sense: for any two points  $p_1,p_2\in P$
the geodesic segment connecting $p_1$ with $p_2$ belongs to $P$.

In what follows by writing $P=\bigcap\limits_{i=1}^n  \alpha_i^+$ we assume that
the collection of walls $\alpha_i$ is minimal: for any $j=1,\dots ,n$ we have
$P\ne \bigcap\limits_{i\ne j} \alpha_i^+$.
A {\it facet} of $P$ is an intersection $P\cap \alpha_i$ for some $i\le n$.
For any $I\subset \{1,\dots,n\}$ a set $\bigcap\limits_{i\in I}  \alpha_i$ is called a {\it face} of 
$P$ if it is not empty.

We can easily define a dihedral angle formed by two walls:
if $\alpha_i\cap \alpha_j\ne \emptyset$ there exists a maximal cell $C$  
of $\Sigma$ intersecting $\alpha_i\cap \alpha_j$.
We define the angle $\angle \left(\alpha_i,\alpha_j\right)$ to be equal to the corresponding Euclidean angle
formed by $\alpha_i\cap C$ and $\alpha_j\cap C$.
Clearly, any dihedral angle formed by two intersecting walls in $\Sigma$ is equal to
 $k\pi / m$ for some positive integers $k$ and $m$.  
A convex polytope $P$ is called {\it acute-angled} if each of the dihedral angles of $P$ 
does not exceed $\pi /2$. 

A convex polytope $P$ is called a {\it Coxeter polytope} if all its dihedral angles are
integer submultiples of $\pi$. Clearly, a fundamental domain of any reflection subgroup of $G$
is a Coxeter polytope in $\Sigma$.  Conversely, Theorem~4.4 of~\cite{D} implies that any Coxeter polytope in $\Sigma$ is a fundamental chamber for subgroup of $G$ generated by reflections in its walls.

\section{Proof of the theorems}

In what follows 
we write  $|P|$ for the number of facets of a convex polytope
$P\subset \Sigma$. 

Let $a_i$ and $a_j$ be intersecting facets of a convex polytope $P\subset \Sigma$.
We say that a dihedral angle of $P$ formed by $a_i$ and $a_j$  is {\it decomposed} 
if there exists a wall $\gamma$ containing $a_i\cap a_j$ and intersecting $\vn \left(P\right)$.

The following lemma is proved by V.~Deodhar.

\begin{lemma}[Deodhar~\cite{De}, Proposition~4.2]
\label{subgr}
Let $\left(G,S\right)$ be a Coxeter system, where $G$ is  an infinite indecomposable  Coxeter group
and $S=\{ s_0,s_1,\dots,s_k \}$.
Let $H=\langle s_{i_1},\dots,s_{i_l} \rangle$ be a proper standard subgroup of $G$.
Then $[G:H]=\infty$.

\end{lemma}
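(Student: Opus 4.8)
The plan is to argue through the Tits (geometric) representation of $(G,S)$, in the spirit of Deodhar's original proof. Realize $G$ on the real $n$-space $V$, $n=|S|$, with basis $\{\alpha_s\}_{s\in S}$ and the Coxeter bilinear form $B$, and let $G$ act on the dual space $V^*$ by the contragredient representation; write $\overline C=\{f\in V^*:f(\alpha_s)\ge 0\text{ for all }s\in S\}$ for the fundamental chamber and $U=\bigcup_{g\in G}g\overline C$ for the Tits cone. I will use three standard facts (see Bourbaki, Ch.~V, or Vinberg): $(a)$ $U$ is a convex cone and $\overline C$ is a strict fundamental domain for the $G$-action on $U$; $(b)$ if $T\subseteq S$ and $f$ lies in the relative interior of the face $\overline C_T=\{f\in\overline C:\ f(\alpha_s)=0\iff s\in T\}$, then $\mathrm{Stab}_G(f)=G_T$; and $(c)$ since $G$ is infinite and indecomposable, $U$ is \emph{salient}, i.e.\ $U\cap(-U)=\{0\}$.

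Write $G_T=\langle s_{i_1},\dots,s_{i_l}\rangle$ with $T\subsetneq S$, and suppose for contradiction that $N:=[G:G_T]<\infty$. As $\{\alpha_s\}$ is a basis there is $f_1\in V^*$ with $f_1(\alpha_s)=0$ for $s\in T$ and $f_1(\alpha_s)=1$ for $s\in S\setminus T$; it lies in the relative interior of $\overline C_T$, so by $(b)$ and orbit--stabilizer its $G$-orbit is a finite set $\{f_1,\dots,f_N\}$, and $N\ge 2$, $f_1\ne 0$ because $T\subsetneq S$. Now put $\bar f=\tfrac1N\sum_{i=1}^N f_i$. Since $G$ permutes the $f_i$, the vector $\bar f$ is fixed by $G$; in particular it is fixed by the reflection $s_\alpha\in G$ attached to every root $\alpha$, and since $s_\alpha$ acts on $V^*$ as a reflection with fixed hyperplane $\{f:f(\alpha)=0\}$ this gives $\bar f(\alpha)=0$ for every root $\alpha$, hence $\bar f(\alpha_s)=0$ for all $s\in S$, hence $\bar f=0$. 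Therefore $-f_1=\sum_{i=2}^N f_i$. Each $f_i$ lies in $U$, and $U$, being a convex cone, is closed under finite sums, so $-f_1\in U$ as well; together with $0\ne f_1\in U$ this contradicts $(c)$. Hence $[G:G_T]=\infty$.

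The one substantial ingredient is $(c)$, salience of the Tits cone of an infinite indecomposable Coxeter group; this is also precisely where indecomposability is indispensable (if $G=G_1\times G_2$ with $G_1$ finite, then $G_{S_1}$ has finite index and, accordingly, $-\overline C\subset U$). The fact $(c)$ can be extracted from the submodule structure of the geometric representation: $U\cap(-U)$ is a $G$-invariant subspace of $V^*$ contained in $U$, $G$ acts trivially on $\mathrm{rad}\,B$, and the essential (nondegenerate) quotient of $V$ is $G$-irreducible for indecomposable $G$, so a nonzero such subspace would force $U=V^*$ and hence $G$ finite; packaging this uniformly --- in particular when $B$ is degenerate but $G$ is not affine --- is the delicate point, and in practice I would simply cite $(c)$ from Vinberg's paper on discrete reflection groups or from Deodhar's article. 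An alternative route bypasses $(c)$ using the existence, for infinite indecomposable $G$, of an \emph{essential} element $g$, i.e.\ one no power of which is conjugate into a proper standard parabolic: the image of $g$ in the finite set $G/G_T$ has some finite order $m$, so $g^m\in G_T$, contradicting essentiality.
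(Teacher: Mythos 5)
The paper does not actually prove this statement: it is imported verbatim as Proposition~4.2 of Deodhar's article \cite{De}, so there is no internal proof to compare against. Judged on its own, your argument is correct and complete modulo the one fact you explicitly flag. The averaging step is clean: $f_1$ in the relative interior of the face $\overline C_T$ has stabilizer exactly $G_T$, a finite index would make the orbit finite, the orbit barycenter is $G$-fixed and hence killed by every simple reflection (since $s_\alpha$ fixes $f$ in the contragredient action iff $f(\alpha)=0$, and the $\alpha_s$ span $V$), so the barycenter vanishes and $-f_1$ is a sum of elements of the convex cone $U$, contradicting salience. The only substantive external input is $(c)$, $U\cap(-U)=\{0\}$ for infinite indecomposable $(G,S)$, and you are right that this is exactly where indecomposability and infiniteness enter; it is a genuine, independently established result (Vinberg's trichotomy for indecomposable Gram matrices, Krammer's thesis, Bourbaki Ch.~V exercises), so citing it is legitimate and introduces no circularity, though one should be aware it is of roughly the same depth as the lemma itself --- some expositions in fact derive the two together. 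Your fallback via essential elements is also valid but leans on a considerably harder (and historically much later) theorem than the statement being proved. Two small slips in your closing aside, neither affecting the proof: if $G=G_1\times G_2$ with $G_1$ finite, the proper standard subgroup of finite index is $G_{S_2}$, not $G_{S_1}$; and what the decomposition puts inside $U$ is $V_1^*\oplus U_2$ rather than all of $-\overline C$.
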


For the proof of Theorem~\ref{facets} we need to consider decomposable groups also. 
More precisely, we use the following corollary of Lemma~\ref{subgr}.

\begin{corollary}
\label{dec}
Let $\left(G,S\right)$ be a Coxeter system, where $G$ is an infinite Coxeter group
and $S=\{ s_0,s_1,\dots,s_k \}$. Suppose that for some $l\le k$ the group 
$G_l=\langle s_0,\dots,s_l \rangle$ is infinite and indecomposable, and let $m$ be a 
positive integer not exceeding $k$. Then a subgroup $H=\langle s_{m},\dots,s_{k} \rangle$ 
has infinite index in $G$.   

\end{corollary}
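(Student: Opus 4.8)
The plan is to deduce the statement from Deodhar's Lemma~\ref{subgr} applied to the Coxeter system $(G_l,\{s_0,\dots,s_l\})$, which is infinite and indecomposable by assumption. I would begin by writing $T=\{s_m,\dots,s_k\}$, so that $H=G_T$ is a standard subgroup of $G$; since $m\ge 1$ we have $s_0\notin T$.

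The first step is the elementary inequality $[G:H]\ge[G_l:G_l\cap H]$. It holds because $g(G_l\cap H)\mapsto gH$ is a well-defined injection from the left cosets of $G_l\cap H$ in $G_l$ into the left cosets of $H$ in $G$: indeed, if $g_1,g_2\in G_l$ satisfy $g_1H=g_2H$, then $g_2^{-1}g_1\in H\cap G_l$. Hence it suffices to show $[G_l:G_l\cap H]=\infty$.

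Next I would identify $G_l\cap H$. By the classical fact that $G_I\cap G_J=G_{I\cap J}$ for any $I,J\subseteq S$ (which follows from the independence of the support of a word on the chosen reduced expression), we obtain $G_l\cap H=G_{\{s_0,\dots,s_l\}\cap T}$. Since $m\ge 1$, the set $\{s_0,\dots,s_l\}\cap T$ is contained in $\{s_1,\dots,s_l\}$, hence is a proper subset of $\{s_0,\dots,s_l\}$; thus $G_l\cap H$ is a proper standard subgroup of $G_l$ with respect to $\{s_0,\dots,s_l\}$. Lemma~\ref{subgr} then gives $[G_l:G_l\cap H]=\infty$, and combined with the inequality above this yields $[G:H]=\infty$. (In the degenerate case $m>l$ the intersection $G_l\cap H$ is trivial, and then $[G_l:G_l\cap H]=|G_l|=\infty$ directly, as $G_l$ is infinite, so Lemma~\ref{subgr} is not even needed.)

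The only point requiring a little care — rather than a genuine obstacle — is the intersection formula $G_I\cap G_J=G_{I\cap J}$ for standard subgroups; everything else is coset counting together with a direct appeal to Lemma~\ref{subgr}.
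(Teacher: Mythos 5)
Your proof is correct, and it rests on the same key ingredient as the paper's --- Deodhar's Lemma~\ref{subgr} applied to an infinite indecomposable standard subgroup --- but the reduction is carried out differently. The paper handles the case $l=k$ directly and otherwise passes to the indecomposable component of $G$ containing $G_l$: after permuting generators it takes the component $G_p\supseteq G_l$, uses the direct-product splitting of $G$ (and of $H$) along that component to relate $[G:H]$ to $[G_p:H\cap G_p]$, and then applies Lemma~\ref{subgr} to $G_p$. You instead restrict to $G_l$ itself, using the always-valid coset injection $[G:H]\ge[G_l:G_l\cap H]$ together with the parabolic intersection formula $G_I\cap G_J=G_{I\cap J}$ to identify $G_l\cap H$ as a proper standard subgroup of $G_l$ (proper because $s_0$ is missing). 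Your route avoids the case split on decomposability and the relabelling of generators, at the cost of invoking the intersection formula for standard parabolic subgroups --- a classical fact, correctly flagged as the one point needing citation; the paper's route avoids that fact but implicitly uses the product decomposition $H=(H\cap G_p)\times(H\cap G'')$ to justify its index identity. Both are sound; yours is arguably the more self-contained piece of coset counting, and your remark on the degenerate case $m>l$ is a harmless bonus.
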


\begin{proof}
If $l=k$ then $G$ is indecomposable, and lemma~\ref{subgr} applies. So, we may assume that $G$ is 
decomposable. Permuting elements of $S$, we may assume that there exists $p$ such that a group $G_p$ is indecomposable and
commutes with all $s_i$ for $i>p$. Then the index $[G:H]$ is equal to the index $[G_p:H\cap G_p]$ 
which is infinite due to Lemma~\ref{subgr}.

\end{proof}

\setcounter{section}{1}
\setcounter{theorem}{0}

\begin{theorem}
\label{facets}
Let $\left(G,S\right)$ be a Coxeter system, where $G$ is infinite and indecomposable, and $S$ is finite.
If $P$ is a compact polytope in $\Sigma$,
then $|P|\ge |S|$.

\end{theorem}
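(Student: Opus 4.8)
The plan is to argue by induction on the number of chambers of $\Sigma$ contained in $P$. The base case is $P$ a single chamber: the chambers of $\Sigma$ form the Cayley graph of $G$ with respect to $S$, so each chamber is adjacent to exactly $|S|$ others, whence $|P| = |S|$ (this also shows the bound is sharp). For the inductive step I would suppose the statement fails and choose a counterexample $P = \bigcap_{i=1}^n \alpha_i^+$ with $n = |P| < |S|$ consisting of the least possible number of chambers. By the base case $P$ contains at least two chambers, hence two adjacent ones, so some wall $\gamma$ meets $\vn(P)$. Since walls do not cross chamber interiors, $\gamma$ splits $P$ into two compact polytopes $P_1 = P\cap\gamma^+$ and $P_2 = P\cap\gamma^-$, each built of strictly fewer chambers, so $|P_1| \ge |S|$ and $|P_2| \ge |S|$ by minimality.

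Next I would run the facet bookkeeping. The walls bounding $P_j$ form a subfamily of $\{\alpha_1,\dots,\alpha_n,\gamma\}$, so $|P_j| \le n+1$; combined with $|P_j| \ge |S| > n$ this forces $|S| = n+1$ and $|P_1| = |P_2| = n+1$, and moreover every $\alpha_i$ must survive as a facet of both $P_1$ and $P_2$. The latter says precisely that $\gamma$ meets the relative interior of every facet of $P$. Since $\gamma$ was an arbitrary wall through $\vn(P)$, in the minimal counterexample $P$ has exactly $|S|-1$ facets, is built of at least two chambers, and \emph{every} wall meeting $\vn(P)$ meets the relative interior of \emph{every} facet of $P$.

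It remains to contradict this rigid configuration, and I expect this to be the main obstacle. Let $r_i$ be the reflection in $\alpha_i$ and $H = \langle r_1,\dots,r_n\rangle$. The connected component $D \subseteq P$ of the complement in $\Sigma$ of all mirrors of $H$ that contains $\vn(P)$ is a fundamental domain for $H$ contained in the compact set $P$; hence $H$ is cocompact, $[G:H] < \infty$, and $H \ne G$ by Lemma~\ref{generators} because $H$ is generated by $n < |S|$ reflections. I would then examine a chamber $C_0$ of $\Sigma$ lying in $D$ (such $C_0$ exists since the walls of $D$ are mirrors of $H$, hence walls of $G$): its walls split into those that are mirrors of $H$ and those interior to $D \subseteq P$, and the rigidity above — that $P$ has one facet fewer than a chamber while every interior wall crosses every facet — should force, after conjugating so that the stabilizer of a minimal face of $P$ becomes a finite standard subgroup, a proper standard subgroup of $G$ of finite index, contradicting Lemma~\ref{subgr} (or Corollary~\ref{dec} when $G$ must be realised through an infinite indecomposable $G_l$). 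Together with the base case this yields $|P| \ge |S|$, which is Theorem~\ref{facets}.
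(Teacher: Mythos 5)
Your induction on the number of chambers, with a minimal counterexample, is a legitimate variant of the paper's induction on the number of facets, and your facet bookkeeping is correct: it does force $|S|=n+1$, $|P\cap\gamma^+|=|P\cap\gamma^-|=n+1$, and every $\alpha_i$ to survive as a facet on both sides of every cutting wall $\gamma$. Your observation that $P$ must then be the fundamental chamber of the cocompact subgroup $H=\langle r_1,\dots,r_n\rangle$ is also sound. But at this point you have only reconstructed the configuration the paper calls $P_{\mathrm{min}}$ and $P_{\mathrm{min}}'$, and the step you defer --- ``contradict this rigid configuration'' --- is exactly where all the work lies; the sketch you give for it does not go through as stated. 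Producing ``a proper standard subgroup of $G$ of finite index'' by conjugating a face stabilizer is not available: $H$ is a proper finite-index reflection subgroup of $G$, but it is not a standard subgroup of $G$, and Lemma~\ref{subgr} says nothing about it. What the paper actually does is apply Deodhar's lemma inside the group $\Gamma_{P_1}$ generated by reflections in the facets of $P_1=P\cap\gamma^+$, showing $[\Gamma_{P_1}:H]=\infty$ and hence that $P$ contains infinitely many chambers.

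To make that work you need three things that your proposal never establishes. First, $P_1$ must be a Coxeter polytope, i.e.\ the angles between $\gamma$ and the $\alpha_i$ must be submultiples of $\pi$ (otherwise $H$ is not a \emph{standard} subgroup of a Coxeter system on the facet reflections of $P_1$, and Lemma~\ref{subgr} does not apply); the paper gets this from a second minimality argument (Claim~2), and your ``minimal number of chambers'' counterexample does not obviously supply it for an arbitrary cutting wall $\gamma$. Second, the indecomposable component of $\Gamma_{P_1}$ containing the reflection in $\gamma$ must be infinite; the paper proves all indecomposable components of $H$ are infinite by a separate gluing argument that again invokes the induction hypothesis. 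Third, one must rule out the degenerate case in which every admissible $\gamma$ is orthogonal to all facets of $P$ (so that $\Gamma_{P_1}$ decomposes and Corollary~\ref{dec} gives nothing); the paper's Claim~3 shows this case forces $G$ itself to be decomposable. Note that your rigidity conclusion (``$\gamma$ crosses every facet'') is compatible with $\gamma$ being orthogonal to every facet, so it does not dispose of this case. Until these three points are supplied, the argument is an outline of the easy half of the proof with the essential contradiction missing.
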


\begin{proof}
Suppose that the theorem is broken. Then there exist 
a Coxeter system $\left(G,S\right)$ and a polytope $P$ in $\Sigma$ 
such that  $|P|< |S|$.
The proof is by induction on $|P|$, i.e. for any $k<|S|$ we assume that there is no 
compact polytope $P'\subset \Sigma$ such that $|P'|<k$ and prove that there is no
compact polytope $P\subset \Sigma$ such that $|P|=k$. 
(Since $G$ is infinite, this trivially holds for the cases $|P'|=0$ and $1$).

Suppose there exists a compact polytope $P\subset \Sigma$ such that $|P|=k<|S|$.
Recall that $\cal P$ is the set of all compact convex polytopes in $\Sigma$.
Define
$$ 
{\cal P}_1=\left\{P_1\in {\cal{P}} \ \left| \ P_1\subset P, \ |P_1|=k \right. \right\}.
$$

Since $P$ is a compact polytope, $P$ is intersected by finitely many walls of $\Sigma$, so
${\cal P}_1$ is a finite set.  ${\cal P}_1$ is not empty as it contains  $P$.
Let $P_{\mathrm{min}}\in {\cal P}_1$ be a polytope minimal with respect to inclusion.

\medskip
\noindent
{\bf Claim 1:} {\it $P_{\mathrm{min}}$ has no decomposed dihedral angle}.\\
Indeed, let $a$ and $b$ be facets of $P_{\mathrm{min}}$, and let $\mu$ be a wall decomposing a dihedral 
angle formed by $a$ and $b$. Then $\mu^+\cap P_{\mathrm{min}}\in \cal P$, $\mu^+\cap P_{\mathrm{min}}\subset P$, 
and  $|\mu^+\cap P_{\mathrm{min}}|\le k$ (since $\mu^+$ contains only one of the facets $a$ and $b$).
By the induction assumption, the case $|\mu^+\cap P_{\mathrm{min}}|< k$ is impossible.
Hence, $|\mu^+\cap P_{\mathrm{min}}|=k$, which contradicts the assumption that $P_{\mathrm{min}}$
is a minimal by inclusion polytope in ${\cal P}_1 $.

\medskip
Define the set of polytopes ${\cal P}_1'$ in the following way:
\begin{multline*}
{\cal P}_1'=\{P_1'\in {\cal{P}} \ | \ P_1'\subset P_{\mathrm{min}}, \ |P_1'|=k+1,\\
\text{ and all but one facet of } P_1' \text{ are facets of } P_{\mathrm{min}}  \}.
\end{multline*}

Clearly, ${\cal P}_1'$ is a finite set.
To show that ${\cal P}_1'$ is not empty notice, that $P_{\mathrm{min}}$ is not a chamber of $\Sigma$ 
since  $|P_{\mathrm{min}}|=k<|S|$, while a chamber has $|S|$ facets.
Therefore, there exists a wall $\mu$ decomposing $P_{\mathrm{min}}$ into two polytopes, namely
$P_{\mathrm{min}}\cap \mu^+$ and $P_{\mathrm{min}}\cap \mu^-$.
It is clear that $|P_{\mathrm{min}}\cap \mu^+|\le k+1$. The case $|P_{\mathrm{min}}\cap \mu^+|< k$ is impossible by the induction assumption,
the case $|P_{\mathrm{min}}\cap \mu^+|= k$ is impossible since $P_{\mathrm{min}}$ is a minimal 
by inclusion element of ${\cal P}_1$.
Hence, $|P_{\mathrm{min}}\cap \mu^+|=k+1$, so $P_{\mathrm{min}}\cap \mu^+ \in {\cal P}_1'$.  

Let ${{\cal P}'_1}_{\mathrm{min}}\subset{\cal P}_1'$ be the set of polytopes of ${\cal P}_1'$ which are minimal by inclusion, and let $P_{\mathrm{min}}'\in {{\cal P}_1'}_{\mathrm{min}}$.

\medskip
\noindent
{\bf Claim 2:} {\it $P_{\mathrm{min}}'$ has no decomposed dihedral angle}.\\
Again, let $a$ and $b$ be facets of $P_{\mathrm{min}}'$, and let $\mu$ be a wall decomposing 
the dihedral angle formed by $a$ and $b$. Then $\mu$ decomposes $P_{\mathrm{min}}'$ into two polytopes, namely
$P_{\mathrm{min}}'\cap \mu^+$ and $P_{\mathrm{min}}'\cap \mu^-$. The polytope 
$\mu^+\cap P_{\mathrm{min}}'\in \cal P$, $\mu^+\cap P_{\mathrm{min}}'\subset P_{\mathrm{min}}$, 
and  $|\mu^+\cap P_{\mathrm{min}}'|\le k+1$
(since $\mu^+$ does not contain either $a$ or $b$).
By the induction assumption, the case $|\mu^+\cap P_{\mathrm{min}}'|< k$ is impossible.
The case $|\mu^+\cap P_{\mathrm{min}}'|=k$ is impossible either, since  $P_{\mathrm{min}}$
is a minimal by inclusion polytope in ${\cal P}_{\mathrm{min}}'$.
Hence, $|\mu^+\cap P_{\mathrm{min}}'|=k+1$, which contradicts the assumption that $P_{\mathrm{min}}'$
is a minimal by inclusion polytope in ${\cal P}_{\mathrm{min}}'$.

\medskip

In particular, Claims 1 and 2 imply that all dihedral angles of $P_{\mathrm{min}}$ and $P_{\mathrm{min}}'$  
are equal to some dihedral angles of chamber of $\Sigma$, so $P_{\mathrm{min}}$ and $P_{\mathrm{min}}'$ are Coxeter polytopes.
Therefore, $\left(\Gamma_{P_{\mathrm{min}}'},S_{P_{\mathrm{min}}'}\right)$ is a Coxeter system for the group 
$\Gamma_{P_{\mathrm{min}}'}$, where $S_{P_{\mathrm{min}}'}$ is the complete collection 
of the reflections with respect to the facets of $P_{\mathrm{min}}'$,
and $\Gamma_{P_{\mathrm{min}}'}$ is  the group generated by all these reflections.
Similarly, $\left(\Gamma_{P_{\mathrm{min}}},S_{P_{\mathrm{min}}}\right)$ is a Coxeter system 
for the group $\Gamma_{P_{\mathrm{min}}}$, and $\Gamma_{P_{\mathrm{min}}}$ is a standard subgroup of 
$\Gamma_{P_{\mathrm{min}}'}$.

The group $\Gamma_{P_{\mathrm{min}}}$ may be decomposable. However, any maximal indecomposable component of 
$\Gamma_{P_{\mathrm{min}}}$ is infinite. Indeed, suppose that $G_0=\langle s_1,\dots,s_l\rangle$ is a maximal 
indecomposable standard subgroup of $\Gamma_{P_{\mathrm{min}}}$, and $G_0$ is finite. Consider the union of all
polytopes $gP_{\mathrm{min}}$, $g\in G_0$. This union is a compact Coxeter polytope with $k-l$ facets consisting of $|G_0|$
copies of $P_{\mathrm{min}}$. This contradicts the induction assumption that we have no compact polytopes in $\Sigma$ with less
than $k$ facets.

Now consider the facet $\mu$ of $P_{\mathrm{min}}'$ which is not a facet of $P_{\mathrm{min}}$, and let us prove the following statement.

\medskip
\noindent
{\bf Claim 3:} {\it there exists $\widetilde P_{\mathrm{min}}'\in{{\cal P}_1'}_{\mathrm{min}}$ such that $\mu$ is not orthogonal to all facets of $P_{\mathrm{min}}$.}\\ 
Suppose the contrary. Take any $P_{\mathrm{min}}'\in{{\cal P}_1'}_{\mathrm{min}}$, and denote by $s$ the reflection in $\mu$. Then $P_{\mathrm{min}}'\cup s(P_{\mathrm{min}}')=P_{\mathrm{min}}$, i.e. $\mu$ divides $P_{\mathrm{min}}$ into two congruent parts. Since $s$ commutes with all the other standard generators of $\Gamma_{P_{\mathrm{min}}'}$, the group $\Gamma_{P_{\mathrm{min}}'}$ is decomposable, which implies that it is a proper subgroup of $G$. In particular, $P_{\mathrm{min}}'$ is not a chamber of $\Sigma$, so there is a wall $\mu_1$ dividing $P_{\mathrm{min}}'$. 

Suppose that $\mu_1$ is not orthogonal to all the facets of $P_{\mathrm{min}}$. Then consider subset of ${\cal P}_1'$ consisting of polytopes lying inside $P_{\mathrm{min}}'\cap\mu_1^+$. This set is not empty since it contains $P_{\mathrm{min}}'\cap\mu_1^+$. Take a minimal by inclusion polytope ${P_{\mathrm{min}}^{'+}}$. It is cut of $P_{\mathrm{min}}$ by a wall $\mu'_1$. By assumption, $\mu'_1$ is orthogonal to all the facets of $P_{\mathrm{min}}$. Therefore, $\mu'_1\ne\mu_1$, and $P_{\mathrm{min}}$ consists of two copies of $P_{\mathrm{min}}^{'+}$. Now consider a subset 
of ${\cal P}_1'$ consisting of polytopes lying inside $P_{\mathrm{min}}'\cap\mu_1^-$, and take a minimal by inclusion polytope $P_{\mathrm{min}}^{'-}$. Clearly, $P_{\mathrm{min}}$ consists of two copies of $P_{\mathrm{min}}^{'-}$ either. However, $P_{\mathrm{min}}^{'-}$ does not intersect $P_{\mathrm{min}}^{'+}$, so a copy of $P_{\mathrm{min}}^{'-}$ should contain $P_{\mathrm{min}}^{'+}$, and a copy of $P_{\mathrm{min}}^{'+}$ should contain $P_{\mathrm{min}}^{'-}$. The contradiction shows that any wall dividing $P_{\mathrm{min}}$ is orthogonal to all facets of $P_{\mathrm{min}}$.

Now consider a chamber $F$ of $\Sigma$ contained in $P_{\mathrm{min}}$. We may assume that at least one facet of $F$ belongs to some face of  $P_{\mathrm{min}}$. Facets of $F$ are of two types: some of them belong to facets of $P_{\mathrm{min}}$, the others belong to walls dividing $P_{\mathrm{min}}$. Both sets are non-empty. However, any facet from one of these sets is orthogonal to any facet from another one. This implies that the group $G=\Gamma_F$ is not indecomposable, which contradicts the assumption of the theorem.      

\medskip  

By Claim~3, we may assume that the reflection $s$ in the facet $\mu$ of $\widetilde P_{\mathrm{min}}'$ does not commute with all the elements of $S_{P_{\mathrm{min}}}$. Consider the maximal indecomposable component $\Gamma$ of $\Gamma_{\widetilde P_{\mathrm{min}}'}$ containing $s$. Since all maximal indecomposable components of $\Gamma_{P_{\mathrm{min}}}$ are infinite, $\Gamma$ is also infinite. So, we are in assumptions of Corollary~\ref{dec}, and $[\Gamma_{\widetilde P_{\mathrm{min}}'}:\Gamma_{P_{\mathrm{min}}}]=\infty$. This implies that $P_{\mathrm{min}}$ contains infinitely many copies of $\widetilde P_{\mathrm{min}}'$, thus,  $P_{\mathrm{min}}\subset P$  contains infinitely many chambers of $\Sigma$.
This contradicts the assumption that $P$ is a compact polytope in $\Sigma$.

\end{proof}

\begin{theorem}
\label{gen}
Let $\left(G,S\right)$ be a Coxeter system, where
$G$ is infinite and indecomposable, and $S$ is finite.
Let $H\subset G$ be a finite index reflection subgroup.
Then any set of reflections generating $H$ contains at least $|S|$ elements.

\end{theorem}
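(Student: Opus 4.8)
The plan is to deduce Theorem~\ref{gen} from Theorem~\ref{facets} together with Dyer's theorem (Theorem~4.4 of~\cite{D}) and Deodhar's finiteness criterion~\cite{De}. First I would observe that by Dyer's result, $H$ is itself a Coxeter group, and moreover $H$ acts on $\Sigma$ by reflections; let $S_H$ be a set of reflections generating $H$, and set $m = |S_H|$. I want to show $m \ge |S|$. Since $H$ has finite index in $G$, the action of $H$ on $\Sigma$ is still cocompact, so $H$ has a compact fundamental domain.

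The key step is to produce a compact \emph{polytope} $P \subset \Sigma$ whose facets correspond bijectively to the generating reflections in $S_H$, so that $|P| \le m$; then Theorem~\ref{facets} gives $|S| \le |P| \le m$ and we are done. To build $P$: for each reflection $s \in S_H$, let $\alpha_s$ be its wall (mirror) in $\Sigma$, and consider the two halfspaces $\alpha_s^{\pm}$. The fundamental chamber of $H$ — the closure of a connected component of $\Sigma$ minus the union of \emph{all} mirrors of $H$ — is a Coxeter polytope $D$, and by Dyer's theorem its facet walls are exactly a canonical generating set for $H$. But an arbitrary generating set $S_H$ may not coincide with this canonical one. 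Here I would use the fact that the canonical generating set of a reflection subgroup is contained in any reflection generating set (this is part of Deodhar's / Dyer's theory of reflection subgroups, and is the role played by Deodhar~\cite{De} in the paper): the ``canonical generators'' of $H$ with respect to its action on $\Sigma$ form the unique minimal generating set by reflections, so $|S_H| \ge (\text{number of canonical generators}) = |D|$, the number of facets of the fundamental chamber $D$ of $H$. Since $D$ is a compact convex polytope in $\Sigma$ (compact because $[G:H]<\infty$ makes the $H$-action cocompact), Theorem~\ref{facets} yields $|D| \ge |S|$. Chaining the inequalities: $m = |S_H| \ge |D| \ge |S|$.

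The main obstacle I anticipate is the comparison between an \emph{arbitrary} reflection generating set $S_H$ and the canonical one coming from the wall structure in $\Sigma$. One must verify carefully that each generator $s \in S_H$ is a reflection in some wall of $\Sigma$ (true, since reflections in $G$ are exactly fixed-point sets of walls), and that the subgroup generated by a reflection subgroup's canonical generators cannot be generated by fewer reflections — this is precisely Lemma~\ref{generators} applied to the Coxeter system $(H, \text{canonical generators})$, which tells us $H$ cannot be generated by fewer than $|D|$ reflections, hence $|S_H|\ge |D|$. So the argument is: $(H,S_D)$ is a Coxeter system of rank $|D|=|P_{\text{fund}}|$ where $P_{\text{fund}}$ is the compact fundamental chamber of $H$ in $\Sigma$; by Lemma~\ref{generators}, $|S_H|\ge |D|$; by Theorem~\ref{facets}, $|D|\ge|S|$; therefore $|S_H|\ge|S|$. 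The only remaining care is confirming that $P_{\text{fund}}$ really is a compact convex polytope in the sense defined in Section~2.3 — finiteness of index guarantees finitely many chambers of $\Sigma$ inside it, hence compactness, and convexity follows since it is cut out by the halfspaces bounded by its facet-walls.
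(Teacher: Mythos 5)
Your final argument --- take the compact fundamental chamber $D$ of $H$ in $\Sigma$, note that its facet reflections form a standard generating set for the Coxeter system $(H,S')$ with $|S'|=|D|$, apply Lemma~\ref{generators} to get $|S_H|\ge|S'|$ for any reflection generating set, and apply Theorem~\ref{facets} to get $|D|\ge|S|$ --- is exactly the paper's proof. The initial detour about matching an arbitrary generating set to facets is unnecessary, but you correctly discard it, so the proposal is correct and takes essentially the same route.
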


\begin{proof}
Let $\left(H,S'\right)$ be a Coxeter system, where $S'$ consists of some 
reflections of $\left(G,S\right)$. By Lemma~\ref{generators}, 
any set of generating reflections of $H$ contains at least $|S'|$ reflections.
So, we are left to show that $|S'|\ge |S|$.

Consider a fundamental chamber $P$ of the subgroup $H$ acting on the complex $\Sigma=\Sigma\left(G,S\right)$. 
Since $H$ is a finite index subgroup, $P$ is a compact polytope in $\Sigma$. 
By Theorem~\ref{facets}, this implies that $|P|\ge |S|$.
Since, $|P|=|S'|$, we obtains the required inequality.

\end{proof}

\setcounter{section}{3}
\setcounter{theorem}{2}

\begin{remark}
\label{nes}
The conditions for $G$ to be infinite and indecomposable are essential in both Theorems~\ref{facets} and~\ref{gen}.
Clearly, any finite group contains a trivial subgroup of finite index. Concerning decomposable groups, the group
$$\left\langle s_1,s_2,s_3\ |\ s_1^2=s_2^2=s_3^2=(s_1s_3)^2=(s_2s_3)^2=1\right\rangle$$ contains a subgroup  
$\left\langle s_1,s_2\ |\ s_1^2=s_2^2=1\right\rangle$ of index two.

\end{remark}

\section{Andreev's theorem}
\label{andr}

In this section we prove a counterpart of Andreev's theorem for facets of an acute-angled polytope 
in the complex $\Sigma$.

\begin{remark}
\label{andr for cox}
Notice that in partial case when $P$ is a chamber, the statement is tautological even 
for arbitrary faces: by construction of $\Sigma$ a collection of walls 
(respectively, facets of chamber) has a non-empty intersection if and only if the 
corresponding reflections generate a finite group. It is easy to see that the same 
is true if $P$ is any Coxeter polytope in $\Sigma$: for that it is sufficient to consider
$P$ as a chamber of corresponding subgroup $\Gamma_P$ of $G$.

\end{remark} 

\begin{lemma}
\label{stacan}
Let $P_1$ and $P_2$ be convex polytopes in $\Sigma$ and let $c$ be a common facet of 
$P_1$ and $P_2$. If all the dihedral angles of $P_1$ and $P_2$ formed by $c$ with other facets of  
$P_1$ and $P_2$ are acute, then $P=P_1\cup P_2$ is a convex polytope.

\end{lemma}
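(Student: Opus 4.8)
The plan is to show that $P = P_1 \cup P_2$ is convex by proving that no wall passing through the common facet $c$ can separate $P_1$ from $P_2$ in a way that breaks convexity, and that the walls bounding $P$ are exactly the walls bounding $P_1$ and $P_2$ other than the wall $\gamma$ carrying $c$. First I would set up notation: let $\gamma$ be the wall with $c = P_1 \cap \gamma = P_2 \cap \gamma$, so that $P_1 \subset \gamma^+$ and $P_2 \subset \gamma^-$. Write $P_1 = \gamma^+ \cap \bigcap_i \alpha_i^+$ and $P_2 = \gamma^- \cap \bigcap_j \beta_j^+$ with minimal collections of walls. The claim to verify is that $P_1 \cup P_2 = \bigcap_i \alpha_i^+ \cap \bigcap_j \beta_j^+$; the nontrivial inclusion is that the right-hand side lies in $P_1 \cup P_2$, i.e. that removing the constraint $\gamma^{\pm}$ does not add any new chambers beyond those obtained by reflecting. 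Equivalently, I want: any chamber $C$ of $\Sigma$ lying in all $\alpha_i^+$ and all $\beta_j^+$ already lies in $\gamma^+$ or $\gamma^-$.

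The key geometric input is the acuteness hypothesis on the angles at $c$. Suppose a chamber $C \subset \bigcap_i \alpha_i^+ \cap \bigcap_j \beta_j^+$ lies, say, in $\gamma^+$ but is not contained in $P_1$. Then $C$ violates one of the remaining constraints of $P_1$ — impossible, since $C$ satisfies all $\alpha_i^+$. Hence every chamber on the $\gamma^+$ side that satisfies the $\alpha$- and $\beta$-constraints is in $P_1$, and symmetrically on the $\gamma^-$ side; so the only thing that could go wrong is... nothing — so actually the real content is showing that $\gamma$ genuinely divides $P$, i.e. that $\gamma$ does not cut through the interior of $P = P_1 \cup P_2$ in any other place and, more importantly, that $P$ as defined really is convex in the sense that the collection $\{\alpha_i\}\cup\{\beta_j\}$ works: one must rule out that some $\beta_j$ (a wall of $P_2$) actually slices into $P_1$. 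This is precisely where acuteness enters via an Andreev-type argument: since the angle between $c$ and any facet $\alpha_i$ of $P_1$ meeting $c$ is at most $\pi/2$, the wall $\beta_j \ne \gamma$ of $P_2$ that also meets $c$ cannot re-enter $\gamma^+$ and intersect the interior of $P_1$; and a $\beta_j$ not meeting $c$ stays on its side by convexity of $P_2$ together with the fact that walls are totally geodesic and $\mathrm{CAT}(0)$ geometry forbids a wall disjoint from $c$ from crossing to the other side of $\gamma$ through $P_1$. So I would argue: take a chamber $D \subset P_1$ adjacent to $\gamma$ across a facet lying in $c$; its reflection $s_\gamma(D) \subset P_2$; walk from $D$ into $P_1$ crossing only walls $\alpha_i$; the reflected walk in $P_2$ crosses the corresponding $\beta_j$'s; acuteness of the angles at $c$ guarantees that these two wall systems are ``compatible'' so that $\bigcap_i \alpha_i^+ \cap \bigcap_j \beta_j^+$ on the $\gamma^+$ side is exactly $P_1$ and on the $\gamma^-$ side is exactly $P_2$.

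Concretely, here are the steps in order. Step 1: reduce to the chamber level — $P_1$, $P_2$, $P$ are all unions of chambers, so it suffices to understand which chambers lie in $\bigcap_i\alpha_i^+\cap\bigcap_j\beta_j^+$. Step 2: show $P_1 \cup P_2 \subseteq \bigcap_i\alpha_i^+\cap\bigcap_j\beta_j^+$, which is immediate since $P_1 \subset \alpha_i^+$ for all $i$ and $P_1 \subset \beta_j^+$ for all $j$ (as $P_1 \subset \gamma^+$ and $P_1$ shares the facet $c$ with $P_2$, and acuteness of the angle between $c$ and each $\beta_j$ meeting $c$ forces $P_1 \subset \beta_j^+$ — this is the local Andreev step), and symmetrically for $P_2$. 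Step 3: conversely, take a chamber $C$ in the intersection of all $\alpha_i^+$ and $\beta_j^+$; it lies in $\gamma^+$ or $\gamma^-$; if in $\gamma^+$, then $C \subset \gamma^+ \cap \bigcap_i \alpha_i^+ = P_1$, and if in $\gamma^-$ then $C \subset P_2$. Step 4: conclude $P = P_1 \cup P_2 = \bigcap_i\alpha_i^+\cap\bigcap_j\beta_j^+$ is a convex polytope (it is not contained in any wall since it contains $P_1$, which isn't). Step 5: if desired, trim the collection $\{\alpha_i\}\cup\{\beta_j\}$ to a minimal one; facets of $P$ are then among facets of $P_1$ and $P_2$.

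The main obstacle is Step 2 — proving $P_1 \subset \beta_j^+$ for every wall $\beta_j$ bounding $P_2$, i.e. that no bounding wall of $P_2$ pokes through into $P_1$. For $\beta_j$ meeting $c$ this is the genuine Andreev-type claim and follows from the acute angle hypothesis by examining the relevant maximal cell $C$ of $\Sigma$ meeting $\beta_j \cap c$: in the Euclidean geometry of that cell, the wall $\beta_j$ meets $\gamma$ in an angle $\le \pi/2$ measured on the $P_2$ side, so $\beta_j \cap \gamma^+$ lies on the far side of $P_1$'s facet in $c$ and hence $P_1 \subset \beta_j^+$. For $\beta_j$ not meeting $c$, one uses that $\beta_j$ is disjoint from $c$, hence (walls being totally geodesic and $\Sigma$ being $\mathrm{CAT}(0)$) $\beta_j$ cannot separate two points both in $P_1$ once one checks $\beta_j$ is disjoint from $P_1$ altogether — which reduces to the meeting case by connectedness. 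I would expect to spend most of the write-up making the "far side" argument in the maximal cell precise, invoking the definition of dihedral angle in $\Sigma$ given in the Preliminaries and the convexity (total geodesicity) of walls.
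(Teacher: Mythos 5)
Your overall strategy is the same as the paper's: write $P_1=\gamma^+\cap\bigcap_i\alpha_i^+$, $P_2=\gamma^-\cap\bigcap_j\beta_j^+$, reduce everything to showing that each polytope lies in every bounding halfspace of the other, and split the bounding walls into those whose intersection with $\gamma$ carries a face of $P$ (handled by the acute-angle computation in a maximal Euclidean cell of $\Sigma$) and those that do not (handled via total geodesicity of walls). Your identification of where acuteness enters, and your Steps 1--5, match the paper's proof in structure.

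There is, however, a genuine gap in your treatment of the second case, the walls $\beta_j$ (resp.\ $\alpha_i$) not meeting $c$. You dispose of it with ``reduces to the meeting case by connectedness,'' but connectedness of the wall is not enough: a wall $\alpha_j$ of $P_1$ whose facet is disjoint from $c$ is a connected totally geodesic set, and if it also met $\vn(P_2)$, a path in $\alpha_j$ from one region to the other would indeed have to cross $\gamma$ --- but nothing so far forces that crossing to occur inside $c$; the wall could cross $\gamma$ far away from $P$, and no contradiction results. (You cannot appeal to convexity of $P=P_1\cup P_2$ to confine the path, since that is what is being proved.) The paper closes this gap with an auxiliary convex region: having first established the containments for all walls $\alpha_i$, $i\in I$, whose intersection with $\gamma$ contains a face of $P$, it sets $Q=\bigcap_{i\in I}\alpha_i^+$, notes that $Q$ is convex, contains $P_2$, and satisfies $Q\cap\gamma=c$, and then joins a point $p_1\in\rel(P_1\cap\alpha_j)$ to a putative point $p_2\in\vn(P_2)\cap\alpha_j$ by a geodesic. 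That geodesic lies in $\alpha_j$ (walls are totally geodesic) and in $\vn(Q)$ (convexity), so its crossing of $\gamma$ lands in $\vn(Q)\cap\gamma=\vn(c)$, which is impossible for a bounding wall of $P_1$ other than $\gamma$. Note also that this makes the two cases logically ordered --- the non-meeting case uses the output of the meeting case to build $Q$ --- whereas your sketch treats them as independent. With the region $Q$ and the geodesic-in-the-wall argument added, your plan becomes the paper's proof.
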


\begin{proof}
Let $P_1=\gamma^+\cap\left(\bigcap\limits_{i=1}^k \alpha_i^+\right)$
and $P_2=\gamma^-\cap\biggl(\bigcap\limits_{j=1}^l \beta_j^+\biggr)$,
where $\gamma$ is a wall containing $c$, $\alpha_i$ is a wall containing a facet $a_i$ of $P_1$,
and $\beta_j$ is a wall containing a facet $b_j$ of $P_2$.
We will prove that $P=P_1\cup P_2= 
\left(\bigcap\limits_{i=1}^k \alpha_i^+\right)\cap \biggl(\bigcap\limits_{j=1}^l \beta_j^+\biggr)$. 
To prove this, it is sufficient to show that $P_2\subset \alpha_i^+$ for all $i=1,\dots,k$,
and $P_1\subset \beta_j^+$ for all $j=1,\dots,l$. We prove the former of these statements,
the latter may be shown in the same way.
Given a facet $a_i=\alpha_i\cap P_1$, we consider two cases: either $\alpha_i\cap \gamma$ contains a face of $P$
or it does not.

At first, consider a facet $a=a_i$ such that $\alpha\cap\gamma $ contains a face of $P$
(where $\alpha$ is the wall containing $a$).
Then there exists a facet $b=b_j$ of $P_2$ such that  
$\left(a\cap c\right)\cap P=\left(b\cap c\right)\cap P$. Denote by $\beta$ the wall containing $b$.
To prove that  $P_2\subset \alpha^+$, it is enough  to prove that $\gamma^-\cap \beta^+\subset \alpha^+$.
For this, it is sufficient to show that $\vn\left(\gamma^-\cap \beta^+\right)\cap \alpha =\emptyset$
(since $c\subset \alpha^+$).
Since $\gamma$ divides $\Sigma$ into two connected components,
$\gamma$ divides $\alpha$ into $\alpha\cap \gamma^+$ and  $\alpha\cap \gamma^-$.
The component $\alpha\cap \gamma^+$ does not intersect   $\vn\left(\gamma^-\cap \beta^+\right)$
since these two sets belong to different halfspaces with respect to $\gamma$.
The component $\alpha\cap \gamma^-$ does not intersect   $\vn\left(\gamma^-\cap \beta^+\right)$ since 
$\alpha\cap \gamma^-$ belongs to $\beta^-$ (the latter statement follows immediately
while considering any maximal cell of $\Sigma$
intersecting $c\cap b\cap a$).

\medskip

Thus, we have proved that  $P\subset \alpha_i^+$ for all $i\in I$, where 
$i\in I$ if and only if $\left(\gamma\cap \alpha_i\right)\cap P\ne \emptyset$.
Consider $Q=\bigcap\limits_{i\in I} \alpha_i^+$.
Notice that $P_2\subset Q\cap \gamma^-$, and $\gamma\cap Q= \gamma\cap P_1=c$.

Now, consider a facet $a_j$ such that $j\notin I$.
Suppose that $\alpha_j\cap \vn\left(P_2\right)\ne \emptyset$. Then $\alpha_j \cap \left(Q\cap \gamma^-\right)$ is not empty either.
Consider two points $p_1\in \rel \left(P_1\cap \alpha_j\right)$ and $p_2\in \vn\left( P_2\right)\cap \alpha_j$.
Here we require $p_1$ to belong to the relative interior of $P_1\cap \alpha_j$ to be sure that $p_1$
lies in the interior of $Q$.
Clearly, $p_2\in \vn\left(Q\right)$, too.
Notice that there is one special case when we cannot take  $p_1\in \rel \left(P_1\cap \alpha_j\right)$: this happens if $a_j$
is a point. However, in this case $\alpha_j$ is also a point, so it cannot intersect $P_2$.    

Consider a geodesic segment $\xi$ joining $p_1$ with $p_2$. 
Since $\alpha_j$ is a wall, and any wall is totally geodesic, $\xi\subset \alpha_j$.
On the other hand,
since $Q$ is a convex polytope and
$p_1,p_2\in \vn\left(Q\right)$, we see that $\xi\subset \vn\left(Q\right)$. 
Moreover, $p_1\in \gamma^+$, and $p_2\in \gamma^-$, so, there exists a point $p\in \xi$
such that $p\in \gamma$. Since $\xi\subset \vn\left(Q\right)$, we obtain that $p\in \vn\left(Q\right)\cap \gamma=\vn\left(c\right)$.
In particular, $\alpha_j\cap \vn\left(c\right)\ne \emptyset$, which is impossible by definition
of convex polytope $P_1$.  
Therefore, $\alpha_j\cap \vn\left(P_2\right)=\emptyset$, and the lemma is proved. 

\end{proof}

\begin{lemma}
\label{andr for facets}
Let $P$ be an acute-angled polytope in $\Sigma$.
Let $a$ and $b$ be facets of $P$ and $\alpha$ and $\beta$ be the walls containing 
 $a$ and $b$ respectively.
If $a\cap b=\emptyset$ then $\alpha\cap \beta=\emptyset$.

\end{lemma}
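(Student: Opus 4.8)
The plan is to argue by contradiction: suppose $a\cap b=\emptyset$ but the walls $\alpha$ and $\beta$ do intersect. Since $\alpha\cap\beta\neq\emptyset$, there is a maximal cell $C$ of $\Sigma$ meeting $\alpha\cap\beta$, and inside $C$ the angle $\angle(\alpha,\beta)$ is defined; the key point I want to exploit is that the walls $\alpha$ and $\beta$ behave, near their intersection, like two hyperplanes through a point in a finite reflection group, so the subcomplex they cut out of $\Sigma$ is controlled by the dihedral group $\langle s_\alpha,s_\beta\rangle$. The goal will be to produce from the pair $(\alpha,\beta)$ a new convex polytope inside $P$ with strictly fewer facets than $P$, or more precisely to run an induction on the number of chambers of $P$ (equivalently on $|P|$, or on the number of walls crossing $P$), using Lemma~\ref{stacan} to glue congruent copies of a smaller acute-angled polytope.

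The main technical step is the following reduction. Consider the wall $\gamma$ that ``bisects'' the angle between $\alpha$ and $\beta$: more carefully, since $\alpha$ and $\beta$ meet and $P$ is acute-angled, I would look at whether any wall separates the region of $P$ near $\alpha\cap\beta$ in a way compatible with the decomposition coming from the dihedral group generated by $s_\alpha$ and $s_\beta$. If $\alpha\cap\beta$ meets $\vn(P)$, then this wall decomposes $P$ and one can cut $P$ by it, landing on a polytope with at most $|P|$ facets on which the induction hypothesis (or a minimality choice, as in the proof of Theorem~\ref{facets}) applies, and $a$, $b$ are still facets with disjoint intersection but now the relevant walls are ``closer''. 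If $\alpha\cap\beta$ does not meet $\vn(P)$, then $a=P\cap\alpha$ and $b=P\cap\beta$ together with the walls between them bound a region, and since the angle $\angle(\alpha,\beta)$ is $\le\pi/2$ while $\alpha\cap\beta\neq\emptyset$, I can reflect $P$ in $\alpha$ (or in $\beta$) and, using Lemma~\ref{stacan} — whose hypotheses are met precisely because $P$ is acute-angled — glue $P$ to its mirror image to obtain a convex polytope $P'=P\cup s_\alpha(P)$. In $P'$ the facet $a$ has disappeared (it became interior), so $|P'|<|P|+ (\text{facets of }s_\alpha(P))$, and after identifying the matched-up facets across $\alpha$ the count drops; iterating around the edge $\alpha\cap\beta$, since the dihedral angle is $\pi/m$ only finitely often or since acuteness forces the orbit to close up, one eventually finds the images of $a$ and $b$ coinciding or meeting, contradicting the assumption.

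The cleanest route, and the one I would write up, is the induction: let $P$ be a counterexample with the minimal number of chambers (this is finite for a compact $P$, or one restricts to the finitely many walls meeting $P$ in general). If $P$ is a single chamber the claim is Remark~\ref{andr for cox}. Otherwise some wall $\mu$ crosses $\vn(P)$. Picking $\mu$ adjacent to the relevant faces and cutting, $P\cap\mu^+$ and $P\cap\mu^-$ are acute-angled polytopes with fewer chambers; if both $a\cap b$ and $\alpha\cap\beta$ are ``visible'' on one side, apply the induction hypothesis there; the residual case where $\alpha\cap\beta$ straddles $\mu$ is handled by the gluing Lemma~\ref{stacan}, reconstructing $P$ from the two halves and transporting the disjointness of $a\cap b$ to a contradiction with $\alpha\cap\beta\neq\emptyset$ via the local dihedral-group picture in a maximal cell meeting $\alpha\cap\beta$.

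The hard part will be the base-case/local analysis: showing that when $\alpha\cap\beta\neq\emptyset$ but $a\cap b=\emptyset$, the wall bisecting the angle (or one of the reflections $s_\alpha$, $s_\beta$) actually decomposes $P$ or lets Lemma~\ref{stacan} apply — i.e., verifying that acuteness of all dihedral angles of $P$ really does force the hypotheses of Lemma~\ref{stacan} at the facet being reflected across, and that the resulting glued polytope is genuinely \emph{convex} and \emph{compact} with a strictly smaller chamber count. Controlling what happens to the two facets $a$ and $b$ under this surgery — making sure they persist (or merge, which is itself the sought contradiction) rather than vanishing ambiguously — is where the care is needed. I expect to lean on the $CAT(0)$ geometry of $\Sigma$ (unique geodesics, walls totally geodesic) exactly as in the proof of Lemma~\ref{stacan} to make the ``straddling geodesic meets the wall'' arguments rigorous.
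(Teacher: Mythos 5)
Your second paragraph does contain the germ of the paper's actual argument: assume $\alpha\cap\beta\ne\emptyset$, so that $r_\alpha r_\beta$ has finite order, unfold $P$ around $\alpha\cap\beta$ by the successive reflections $r_\alpha,\ r_\alpha r_\beta r_\alpha,\ \dots$, keep the growing union $Q_i=P_0\cup\dots\cup P_i$ convex via Lemma~\ref{stacan}, and contradict the finiteness of the dihedral group. But the route you say you would actually write up --- induction on the number of chambers, cutting $P$ by a wall $\mu$ meeting $\vn(P)$ --- does not work: the pieces $P\cap\mu^{\pm}$ of an acute-angled polytope need not be acute-angled (the new dihedral angles along $\mu$ are of the form $k\pi/m$ and can exceed $\pi/2$, e.g.\ a large equilateral triangle in the $\widetilde A_2$ complex cut by a generic wall acquires an angle $2\pi/3$), so the induction hypothesis does not apply to them; moreover the ``residual case where $\alpha\cap\beta$ straddles $\mu$,'' which you defer to Lemma~\ref{stacan}, is not a residual case --- it is the entire content of the lemma.

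In the unfolding route you explicitly leave unverified the one point where the hypothesis $a\cap b=\emptyset$ actually enters, and it cannot be skipped. Disjointness of $a$ and $b$ is what guarantees that at each stage the hinge $c_i=P_i\cap P_{i+1}$ is a genuine common \emph{facet} of $Q_i$ and $P_{i+1}$, rather than a proper subface of a larger facet of $Q_i$, so that every dihedral angle of $Q_i$ at $c_i$ is a dihedral angle of the single acute-angled polytope $P_i$ and Lemma~\ref{stacan} applies at the next step. If $a\cap b\ne\emptyset$ this fails: for a right-angled square chamber with $a\perp b$, $Q_1=P_0\cup r_\alpha(P_0)$ is a rectangle whose facet in the wall $r_\alpha(\beta)=\beta$ strictly contains $c_1=r_\alpha(b)$, and $Q_1\cup P_2$ is a non-convex L-shape --- so without this verification your iteration does not ``close up,'' it simply breaks. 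Finally, your closing contradiction (``the images of $a$ and $b$ coinciding or meeting'') is not the right one and is not obviously reachable; the clean contradiction is that convexity of each $Q_i$ forces $\vn(P_{i+1})\cap\vn(Q_i)=\emptyset$ for all $i$ (the wall $\gamma_i$ separates them), while $P_j=P_0$ for some $j$ gives $\vn(P_j)\cap\vn(Q_{j-1})\supseteq\vn(P_0)\ne\emptyset$. As it stands the proposal identifies the right ingredients but leaves the decisive steps as acknowledged gaps.
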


\begin{proof}
Let $r_{\alpha}$ and $r_{\beta}$ be the reflections with respect to $\alpha$ and ${\beta}$. 
Suppose that $\alpha\cap {\beta}\ne \emptyset$. Then, by construction of $\Sigma$, 
$r_{\alpha}$ and $r_{\beta}$ generate  a finite dihedral group.

Consider a sequence of polytopes $P_0=P$, $P_1=r_{\alpha}P_0$, $P_2=r_{\alpha}r_{\beta}r_{\alpha}P_1
=r_{\alpha}r_{\beta}P_0$,
$P_3=\left(r_{\alpha}r_{\beta}\right)r_{\alpha}\left(r_{\alpha}r_{\beta}\right)^{-1}P_2 =r_{\alpha}r_{\beta}r_{\alpha}P_0$, and so on, i.e. 
$P_i=r_{\alpha}r_{\beta}P_{i-2}$. By construction, $P_i$ and $P_{i+1}$ have a common facet,
and $P_i$ is symmetric to $P_{i+1}$ with respect to this facet. 
Notice that each of $P_i$ is an acute-angled polytope (as an image of an acute-angled polytope).
Let $Q_0=P_0$ and $Q_i=Q_{i-1}\cup P_i$.

We claim that 
$Q_i$ is a convex polytope and any dihedral angle of $Q_i$ formed by $c_i=P_i\cap P_{i+1}$
and any other facet is acute. 
The proof is by induction on $i$.
Indeed, $Q_0=P$ is a convex acute-angled polytope.
Suppose that the statement is true for $Q_{i-1}$. Then Lemma~\ref{stacan} implies that $Q_i$ is a convex polytope.
Any dihedral angle of $Q_i$ formed by $c_i$
and any other facet is a dihedral angle of  an acute-angled polytope $P_i$, and hence, is acute.
Therefore, $Q_i$ is a convex polytope for any $i$. 

Notice, that the wall $\gamma_{i-1}$ (containing the facet $c_{i-1}$ of $Q_{i-1}$)
decomposes $Q_i$ into two convex polytopes $Q_i\cap \gamma_{i-1}^+=Q_{i-1}$ and $Q_i\cap \gamma_{i-1}^-=P_i$,
and hence, $\vn\left(P_i\right)$ does not intersect $\vn\left(Q_{i-1}\right)$.
On the other hand, $r_{\alpha}r_{\beta}$ has finite order, and hence,  there exists $j\in \N$ such that $P_j=P_0$.
The contradiction competes the proof of the lemma.

\end{proof}

\section{The case of equal ranks}
\label{nn}

Throughout this section we suppose that $G$ is an infinite indecomposable Coxeter group with a finite set $S$
of standard generators, $H\subset G$ is a finite index reflection subgroup with a set $S'$ of standard generators, 
and the ranks of $G$ and $H$ are equal (i.e. $|S'|=|S|$). Let $|S'|=k$.

\begin{lemma}
\label{nerve}
The nerve of a Coxeter system $\left(H,S'\right)$ can be obtained from the nerve of $\left(G,S\right)$  
by deleting some simplices.
\end{lemma}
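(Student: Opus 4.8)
The plan is to realize both nerves geometrically inside $\Sigma$ using a fundamental chamber for $H$ that has exactly $k$ facets, which exists because $|S'|=k=|S|$ and a fundamental chamber $P$ of $H$ is a compact Coxeter polytope in $\Sigma$ with $|P|=|S'|=k$ facets. First I would choose such a $P=\bigcap_{i=1}^k\alpha_i^+$ and compare it with the chambers of $\Sigma$: since $P$ consists of finitely many chambers and $\Sigma$ is $CAT(0)$, I want to pick a chamber $F\subset P$ positioned so that as many facets of $F$ as possible lie on $\partial P$; more precisely, by the argument used in the proof of Theorem~\ref{facets} (minimality of $P$ forces it to be a Coxeter polytope with no decomposed dihedral angle) I would argue that one can choose $P$ itself to be a chamber-like object, or failing that, that the walls dividing $P$ interact with the facets of $P$ in a controlled way.

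The key step is the translation between ``faces'' and ``finite standard subgroups''. Recall that a simplex of the nerve of $(G,S)$ on a vertex subset $T\subset S$ exists iff $G_T$ is finite, and by Remark~\ref{andr for cox} (the tautological case of Andreev's theorem for the chamber) this happens iff the corresponding walls of the chamber have nonempty intersection, i.e. iff the corresponding collection of facets of a chamber $F$ of $\Sigma$ meets in a face. The same holds for $H$: a simplex of the nerve of $(H,S')$ on $T'\subset S'$ exists iff the corresponding facets $\{a_i : i\in T'\}$ of the Coxeter polytope $P$ meet in a face of $P$. So I need a bijection $S'\to S$ under which: whenever a set of facets of $P$ meets, the matched set of facets of a chamber $F$ meets. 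The natural bijection comes from picking a chamber $F\subset P$ and matching each facet $a_i$ of $P$ (lying in wall $\alpha_i$) to the facet of $F$ lying in the unique wall of $F$ through which one exits $P$ across $a_i$ — equivalently, matching $\alpha_i$ to the wall of $\Sigma$ bounding $F$ that is ``parallel-most'' toward $a_i$. For this to be well-defined and injective I would use that $P$ has exactly $k=|S|$ facets and $F$ has exactly $|S|$ facets, and that each facet of $P$ is a union of facets of chambers, each such chamber-facet lying in some wall of some chamber.

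The heart of the matter — and the main obstacle — is the direction ``faces of $P$ go to faces of $F$'': if facets $a_{i_1},\dots,a_{i_m}$ of $P$ have nonempty common intersection (a face $f$ of $P$), why must the matched facets of $F$ intersect? Near a point $x$ in the relative interior of the face $f$, the polytope $P$ looks locally like a cone, and the chambers of $\Sigma$ touching $x$ are governed by the finite subgroup $\mathrm{Stab}_G(x)$; the facets $a_{i_j}$ contribute walls through $x$, so the reflections in $\alpha_{i_1},\dots,\alpha_{i_m}$ lie in a common finite group, hence (by the nerve description for $G$, applied to the appropriate conjugate of a standard parabolic) the matched walls of $F$ meet. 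Conversely, if the matched facets of $F$ meet in a face, the reflections of $G$ in the walls $\alpha_{i_j}$ generate a finite subgroup of $G$, but a priori this does not force the facets $a_{i_j}$ of $P$ themselves to intersect — here I would invoke Lemma~\ref{andr for facets} (Andreev's theorem for facets) applied to the Coxeter, hence acute-angled, polytope $P$: if $a_{i_j}\cap a_{i_l}=\emptyset$ for some pair then $\alpha_{i_j}\cap\alpha_{i_l}=\emptyset$, contradicting that the corresponding reflections generate a finite dihedral group. Thus pairwise intersections of the $\alpha$'s persist, and a further local/inductive argument at the face promotes pairwise intersection to common intersection, giving a face of $P$. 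Assembling these two implications shows the map on vertex sets sends simplices to simplices in a way that exhibits the nerve of $(H,S')$ as a subcomplex of the nerve of $(G,S)$ obtained by deleting the missing simplices, which is exactly the claim.
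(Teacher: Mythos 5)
Your overall strategy (realize both nerves via the facet structure of a fundamental chamber $P$ of $H$ and of a chamber $F$ of $\Sigma$, using the fact that faces of a Coxeter polytope correspond exactly to finite subgroups as in Remark~\ref{andr for cox}) is the right starting point, but the proposal has a genuine gap at its central step: the bijection between the $k$ facets of $P$ and the $k$ facets of $F$ is never actually defined. Phrases like ``the wall of $F$ through which one exits $P$ across $a_i$'' or ``parallel-most toward $a_i$'' do not determine a map, and there is no canonical one-step matching. Consequently your key implication --- if facets $a_{i_1},\dots,a_{i_m}$ of $P$ meet, then the \emph{matched} facets of $F$ meet --- is not proved: your local argument at a point $x$ in the relative interior of the face only shows that the reflections in $\alpha_{i_1},\dots,\alpha_{i_m}$ generate a finite group (which is just the statement that this collection spans a simplex of the nerve of $H$), and says nothing about the walls of $F$, which are different walls. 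The paper resolves exactly this difficulty by building the bijection in elementary steps: it produces a chain $P=P_0\supset P_1\supset\cdots\supset P_m=F$ of Coxeter polytopes, each with exactly $k$ facets and with consecutive terms sharing $k-1$ facets, so that at each step only one facet is replaced and the ``faces go to faces'' claim can be checked directly (the case $f_1\in J$ uses that $P_{i+1}$ and $f_1$ lie on opposite sides of the new wall). Note also that the existence of each $P_{i+1}$ with \emph{exactly} $k$ facets is itself nontrivial; it needs Theorem~\ref{facets}, Corollary~\ref{dec} and indecomposability of $G$ (the Claim in the paper's proof, parallel to Claim~3 of Theorem~\ref{facets}), which your sketch waves at but does not supply.

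Separately, your final ``converse'' paragraph is both unnecessary and incorrect. The lemma only asserts that the nerve of $H$ embeds as a subcomplex after deleting simplices, so you never need to show that intersecting facets of $F$ force intersecting facets of $P$. If your argument there worked, it would prove the two nerves are isomorphic, which is false: the paper's own example $\Gamma(2,\infty,\infty)\subset\Gamma(2,3,\infty)$ of index three loses an edge of the nerve. Concretely, the step ``the matched facets of $F$ meet, hence the reflections in the $\alpha_{i_j}$ generate a finite group'' does not follow (again, different walls), and ``pairwise intersection promotes to common intersection'' is not valid in general either.
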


\begin{proof}
Let $P$ be a fundamental chamber of $H$. Define the set of polytopes ${\cal P}(P)$ in the following way:
$$
{\cal P}(P)=\{P'\in {\cal{P}} \ | \ P'\subset P,
\text{ all but one facet of } P' \text{ are facets of } P\}.
$$

${\cal P}(P)$ is finite and non-empty: since $P$ is not a chamber of $\Sigma$,  at least one wall of $\Sigma$
decomposes  $P$ into two smaller polytopes, each of them belongs to ${\cal P}(P)$. Thus, the set ${\cal P}_{\mathrm{min}}(P)$ 
of polytopes minimal in ${\cal P}(P)$ by inclusion is not empty. 


\medskip
{\bf Claim:} {\it there exists ${ P}_1\in{\cal P}_{\mathrm{min}}(P)$, such that ${ P}_1$ is a Coxeter polytope, and $|{ P}_1|=k$.}

The first statement is evident: any polytope ${ P}_1'\in{\cal P}_{\mathrm{min}}(P)$ is a Coxeter polytope. 
Indeed, let $\mu_1$ be the facet of $P_1'$ which is not a facet of $P$. If ${ P}_1'$ is not a Coxeter polytope, 
a dihedral angle formed by $\mu_1$ and some facet of $P$ is decomposed by some wall $\mu_1'$. 
Then $\mu_1'$ cuts out of $P$ a polytope which is contained in  ${ P}_1'$, which contradicts minimality of  ${ P}_1'$.

By Theorem~\ref{facets}, any ${ P}_1'\in{\cal P}_{\mathrm{min}}(P)$ has at least $k$ facets. If it has more than $k$ facets 
(i.e., $k+1$ facets), then all the facets of $P$ are facets of ${ P}_1'$, so $H$ is a standard subgroup of the group 
$\Gamma_{P_1'}$ which is generated by reflections in the facets of ${ P}_1'$. 

The further proof of the claim is similar to the proof of Claim~3 from the proof of Theorem~\ref{facets}.

Suppose that any ${ P}_1'\in{\cal P}_{\mathrm{min}}(P)$ has exactly $k+1$ facets. It follows from Theorem~\ref{facets} that any indecomposable 
component of $H$ is infinite. So, if for some ${ P}_1'\in{\cal P}_{\mathrm{min}}(P)$ the corresponding facet $\mu_1$ is not 
orthogonal to all the facets of $P$, we are in assumptions of Cor.~\ref{dec} for the groups $\Gamma_P\subset\Gamma_{{ P}_1'}$. 
Therefore, we may assume that for any ${ P}_1'\in{\cal P}_{\mathrm{min}}(P)$ the corresponding facet $\mu_1$ is orthogonal to all the 
facets of $P$, and $P$ consists of two copies of ${ P}_1'$. In particular, $\Gamma_{{ P}_1'}$ is decomposable, so ${ P}_1'$ is 
not a chamber of $\Sigma$.

Now take any wall $\mu_2$ dividing ${ P}_1'$, and suppose that $\mu_2$ is not orthogonal to all the facets of $P$. Then consider two polytopes $P^+,P^-\in{\cal P}_{\mathrm{min}}(P)$ lying inside $P\cap\mu_2^+$ and $P\cap\mu_2^-$ respectively. $P$ consists of two copies of each of these polytopes, however, they do not intersect, so a copy of $P^-$ should contain $P^+$, and a copy of $P^+$ should contain $P^-$. The contradiction implies that any wall dividing $P$ is orthogonal to all the facets of $P$. Then, as in the Claim~3 from the proof of Theorem~\ref{facets}, take a chamber $F$ of $\Sigma$ contained in $P$ and show that the group $G=\Gamma_F$ is not indecomposable. The contradiction completes the proof of the claim.   

\medskip

The claim above implies that we may take ${ P}_1\in{\cal P}_{\mathrm{min}}(P)$ such that the corresponding facet $\mu_1$ 
does not intersect exactly one facet of $P$, say $f_1$. All the remaining facets of $P$ are facets of ${ P}_1$. 
Therefore, we have a one-to-one correspondence between 
facets of $P$ and ${ P}_1$: facet $\mu_1$ corresponds to $f_1$, and any other facet corresponds to itself. This implies 
a correspondence of vertices of the nerves of $H$ and $\Gamma_{P_1}$. We want to prove that the nerve of $H$  
can be obtained from the nerve of $\Gamma_{P_1}$  by deleting some simplices. For this we show that if a collection 
of facets of $P$ has a non-empty intersection in $P$, then the corresponding collection of facets of ${ P}_1$ (i.e. 
substituting   $\mu_1$ by $f_1$) has a non-empty intersection in ${ P}_1$. 

Suppose that a collection $J=\{f_{i_1},f_{i_2},\dots,f_{i_n}\}$ of facets of $P$ defines a face of $P$, i.e.
the intersection of all the facets contained in $J$ is not empty. If $f_1\notin J$, then 
the facets contained in $J$ have non-empty intersection in  ${ P}_1$ (see Remark~\ref{andr for cox}).     
If $f_1\in J$, then consider the face $f$ of $P$ defined by the collection $J\setminus f_1$. By our assumption, 
$f$ intersects $f_1$. On the other hand, it was shown above that $f$ contains a face of ${ P}_1$. Since 
${ P}_1$ and $f_1$ are contained in distinct halfspaces with respect to $\mu_1$, this implies that
$f$ intersects $\mu_1$.       

\medskip

Therefore, the nerve of $H$ can be obtained from the nerve of $\Gamma_{P_1}$  by deleting some simplices.
Now, substituting $P$ by ${ P}_1$ in the construction above, we may choose a polytope 
${ P}_2$ with $k$ facets which is minimal in ${\cal P}(P_1)$. Since ${ P}_2\ne { P}_1$, ${ P}_2$ contains smaller number of
chambers of $\Sigma$ than ${ P}_1$ does. Following this procedure (i.e. ${ P}_3$ is minimal in ${\cal P}(P_2)$ 
and so on), we see that for some $m$ the polytope ${ P}_m$ is a chamber of $\Sigma$, so $\Gamma_{P_{m}}=G$. 
The same proof as above shows that for any $i$ the nerve of $\Gamma_{P_i}$ can be obtained from the nerve of 
$\Gamma_{P_{i+1}}$ by deleting some simplices. Hence, the nerve of $H$ also can be obtained from the nerve of 
$G$ by deleting some simplices. 

\end{proof}

\begin{example}
As an example of the situation described in Lemma~\ref{nerve}, consider a group $\Gamma=\Gamma(2,3,\infty)$ generated by reflections in
the sides of hyperbolic triangle with angles $\pi/2,\pi/3$ and $0$. The nerve $N$ of the group consists of $3$ vertices
joined by two edges, corresponding to dihedral groups of order $4$ and $6$. This group has exactly three reflection subgroups 
of rank $3$. The subgroup $\Gamma_1=\Gamma(3,3,\infty)$ of index two is generated by reflections in the sides of triangle with angles $\pi/3,\pi/3$ and $0$. Its nerve $N_1$ is isomorphic to $N$. The subgroup $\Gamma_2=\Gamma(2,\infty,\infty)$ of index three is generated by reflections in the sides of triangle with angles $\pi/3,0$ and $0$. Its nerve $N_2$ consists of $3$ vertices, only two of which joined by an edge corresponding to dihedral group of order $4$. The subgroup $\Gamma_3=\Gamma(\infty,\infty,\infty)$ of index six is generated by reflections in the sides of ideal triangle. Its nerve $N_3$ consists of $3$ vertices and no edges.

\end{example}

Finally, we provide a necessary condition for $\left(G,S\right)$  to have a finite index reflection subgroup of the same rank.

\begin{lemma}
\label{comm}
Let $(G,S)$ be a Coxeter system, where $G$ is infinite indecomposable, and $S$ is finite. 
Suppose that there exists a finite index reflection subgroup $H$ of $G$ of rank $|S|$. 
Then there exists $s_0\in S$ such that at least one of the following holds:

(1) $s_0$ commutes with all but one elements of $S$;

(2) the order of $s_0s$ is finite for all $s\in S$.
\end{lemma}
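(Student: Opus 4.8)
The plan is to leverage the structure uncovered in Lemma~\ref{nerve}, together with the geometry of the polytope $P_1$ produced there, and to track how a single facet $\mu_1$ of $P_1$ sits with respect to the facets of the fundamental chamber $P$ of $H$. Recall from the proof of Lemma~\ref{nerve} that we may choose ${ P}_1\in{\cal P}_{\mathrm{min}}(P)$ with exactly $k=|S|$ facets, all but one of which are facets of $P$; the exceptional facet $\mu_1$ meets all facets of $P$ except exactly one, call it $f_1$, and the reflection $r_{\mu_1}$ together with the reflections in the facets of $P$ generates a finite-index reflection subgroup $\Gamma_{P_1}$ of $G$. Iterating, there is a chain $P=Q_0\supset Q_1\supset\dots\supset Q_m$ of Coxeter polytopes with $k$ facets, where $Q_{i+1}\in{\cal P}_{\mathrm{min}}(Q_i)$ and $Q_m$ is a chamber $F$ of $\Sigma$, so $\Gamma_F=G$. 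The element $s_0$ we want will be the reflection $r_{\mu}$ in the new facet $\mu$ appearing at the \emph{last} step, i.e. when passing from $Q_{m-1}$ to the chamber $Q_m=F$; under the facet-to-facet identification this reflection corresponds to some standard generator $s_0\in S$, and I will show this $s_0$ satisfies (1) or (2).

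Concretely, $F$ has $k=|S|$ facets, indexed by $S$; one of them is $\mu$, corresponding to a generator $s_0$. The remaining $k-1$ facets of $F$ are facets of $Q_{m-1}$. Apply the dichotomy used for Claim~3 in the proof of Theorem~\ref{facets} to the pair $\Gamma_{Q_{m-1}}\subset\Gamma_F=G$ with distinguished facet $\mu$. If $\mu$ is orthogonal to every facet of $Q_{m-1}$, then $s_0$ commutes with every element of $S\setminus\{s_0\}$, which would make $G$ decomposable (as $|S|\ge 2$ since $G$ is infinite) — a contradiction. Hence, by the argument of Claim~3, $\mu$ fails to be orthogonal to all facets of $Q_{m-1}$ but is constrained: $\mu$ meets all facets of $Q_{m-1}$ except at most one. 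If $\mu$ meets \emph{all} $k-1$ facets of $Q_{m-1}$, then by the construction of $\Sigma$ the reflection $r_\mu=s_0$ generates a finite group with each of the other standard generators, i.e. $s_0s$ has finite order for all $s\in S$, which is case (2). If $\mu$ misses exactly one facet $g$ of $Q_{m-1}$, then $r_\mu=s_0$ does not form a finite group with $r_g$, but does meet — hence generates a finite group with — each of the other $k-2$ standard generators, and it also commutes with none-or-some of them; in any case $s_0$ commutes with all elements of $S$ except possibly the one corresponding to $g$. Actually one must be slightly more careful: to land in case (1) one needs $s_0$ to \emph{commute} (angle exactly $\pi/2$) with the other $k-2$ generators, not merely generate a finite group with each. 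This is where the \emph{acute-angledness} of Coxeter chambers (all dihedral angles are submultiples of $\pi$, in particular $\le\pi/2$, and pairs of non-disjoint walls in $F$ give angle exactly $\pi/m$) is used: one shows that the facet $\mu$ that is cut off in the last step — being a single new mirror decomposing $Q_{m-1}$ and matching the indecomposable/decomposable alternative — must be \emph{perpendicular} to each facet it meets, except the one across which the decomposition of $Q_{m-1}$ occurs. I would formalize this by re-running the orthogonality argument of Claim~3: if $\mu$ met some facet at an angle $<\pi/2$ that is not perpendicular, one produces a smaller polytope in ${\cal P}_{\mathrm{min}}(Q_{m-1})$ and derives the same kind of contradiction as in Claim~3 and in the Claim of Lemma~\ref{nerve}.

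The main obstacle I expect is precisely this last point: controlling the \emph{angles} (not just finiteness of pairwise products) between the distinguished generator $s_0$ and the other generators, so as to get genuine commutation in case (1) rather than the weaker ``generates a finite dihedral group''. The proofs of Claim~3 and of the Claim in Lemma~\ref{nerve} already contain the needed mechanism — they repeatedly exploit the fact that a \emph{minimal} cutting wall must be orthogonal to all facets of the polytope it divides, and that otherwise one can split into $\mu_1^+$ and $\mu_1^-$ pieces whose minimal representatives would have to contain each other — so the plan is to isolate this mechanism into the statement: \emph{if a single wall $\mu$ decomposes a Coxeter $k$-polytope $Q$ with $k$ facets into a chamber plus (a copy of) a smaller $k$-polytope, then $\mu$ is orthogonal to every facet of $Q$ it meets except at most one}, and then read off (1) or (2) accordingly. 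A secondary subtlety is tracking the facet-to-generator identification through the whole chain $Q_0\supset\dots\supset Q_m$ to make sure the ``exceptional'' facet at the last step really does correspond to a well-defined $s_0\in S$; this follows from the one-to-one correspondence of facets established in Lemma~\ref{nerve}, applied at the step $i=m-1$. Once these two pieces are in place, the conclusion of Lemma~\ref{comm} is immediate.
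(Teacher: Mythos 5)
You have correctly identified the crux --- obtaining genuine orthogonality (commutation) in case (1) rather than the weaker ``$s_0s$ has finite order'' --- but the mechanism you propose to close this gap does not work, and the ambient polytope you start from is the wrong one. The argument of Claim~3 (and of the Claim in Lemma~\ref{nerve}) runs in the opposite direction from what you need: it shows that \emph{not every} minimal cutting wall can be orthogonal to all facets (this is what permits invoking Corollary~\ref{dec}), whereas here you must show that one particular cutting wall \emph{is} orthogonal to all but one facet of the chamber it cuts off. Since the cut-off piece $F=Q_m$ is already a chamber, a dihedral angle $\pi/m$ with $m\ge 3$ between $\mu$ and another facet of $F$ produces no smaller polytope in ${\cal P}_{\mathrm{min}}(Q_{m-1})$ and hence no contradiction; and a priori $\mu$ could simultaneously miss one facet of $F$ and meet another at angle $\pi/3$, which is exactly the configuration the lemma must exclude. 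Your claimed dichotomy ``$\mu$ meets all facets, or misses exactly one and is orthogonal to the rest'' is therefore unsupported.

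The paper obtains the orthogonality by a different, essentially counting, device: it starts not from the chain of Lemma~\ref{nerve} but from a $k$-facet polytope $P_0$ chosen so that $P_0$ \emph{contains no $k$-facet polytopes other than chambers}. After cutting off a minimal $P_1$ (necessarily a chamber) by the wall $\mu$, the complement $P_2=P_0\setminus P_1$ has $k$ or $k+1$ facets by Theorem~\ref{facets}. If it has $k$ facets, the minimality of $P_0$ forces $P_2$ to be a chamber, hence $P_2=r_\mu(P_1)$; counting the facets of $P_0=P_1\cup r_\mu(P_1)$ (a facet of $P_1$ and its mirror image lie in the same wall exactly when that facet is orthogonal to $\mu$, so non-orthogonal facets count twice) forces exactly $k-2$ facets of $P_1$ to be orthogonal to $\mu$, which is case (1). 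If it has $k+1$ facets, every facet of $P_0$ has points on both sides of $\mu$ and hence meets $\mu$, which is case (2). Your $Q_{m-1}$ from the chain of Lemma~\ref{nerve} need not have this minimality property, so you cannot conclude that $Q_{m-1}\setminus F$ is a chamber when it has $k$ facets, and without that the reflection-and-count step --- the only available route to actual commutation in case (1) --- is unavailable. (The paper also treats separately the preliminary case where $P_0$ has a decomposed dihedral angle; your chain consists of Coxeter polytopes and so avoids this, but at the cost of losing precisely the minimality you need.)
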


\begin{proof}
Let ${\cal P}_k$ be the set of polytopes in $\Sigma$ with exactly $k$ facets. 
Let $P_0\in{\cal P}_k$ be any polytope containing no elements of  ${\cal P}_k$ except
chambers of $\Sigma$, and $P_0$ itself is not a chamber of $\Sigma$. 
Since the rank of $H$ is equal to the rank of $G$, such $P_0$ does exist (for example, one can take as $P_0$
a fundamental domain of $H$ in $\Sigma$). 
Clearly, there is at least one wall of $\Sigma$ which divides $P_0$.  

Suppose that $P_0$ contains a decomposed dihedral angle formed by facets $f_1$ and $f_2$ of $P_0$.
Let $\mu$ be a wall of $\Sigma$ which decomposes that angle. Then $\mu$ decomposes $P_0$ into 
two polytopes $P_0^+$ and $P_0^-$, each of them has at most $k$ facets 
(since one of $f_1$ and $f_2$, say $f_2$, is not a facet of $P_0^+$, 
and the other is not a facet of $P_0^-$). By Theorem~\ref{facets}, each of  $P_0^+$ and $P_0^-$ has exactly $k$ facets.
Thus,  $P_0^+$ and $P_0^-$ are chambers of $\Sigma$, and $P_0^+$ can be obtained from $P_0^-$ by reflecting in 
$\mu$. Since $k-1$ of $k$ facets of  $P_0^+$ are facets of $P_0^-$, a facet $\mu\cap P_0$ of $P_0^+$ 
is orthogonal to all but one facets of $P_0^+$, so the condition (1) holds. A unique facet of $P_0^+$ which 
is not orthogonal to $\mu\cap P_0$ is $f_1$. By assumption, $\mu$ intersects $f_1$, so in this case the 
condition (2) also holds.    

Now suppose that $P_0$ contains no decomposed dihedral angles. In particular, $P_0$ is a Coxeter polytope.
As in the proof of Lemma~\ref{nerve}, consider the set ${\cal P}(P_0)$, and take a minimal (by inclusion) 
element $P_1$. Again, $P_1$ is a Coxeter polytope, and $|{ P}_1|=k$. By the choice of $P_0$, this implies that 
$P_1$ is a chamber of $\Sigma$. Let $\mu$ be the wall of $\Sigma$ which contains a facet of $P_1$ but contains no
facets of $P_0$. Let $P_2=P_0\setminus P_1$. By Theorem~\ref{facets}, $P_2$ has at least $k$ facets. It is also clear 
that  $P_2$ has at most $k+1$ facets.

If $P_2$ has $k+1$ facets, then any facets of $P_0$ contains a facet of $P_2$. Therefore, any wall containing
a facet of $P_1$ contains a facet of $P_2$. Since ${ P}_1$ and $P_2$ are contained in distinct halfspaces 
with respect to $\mu$, this implies that $\mu$ intersects all facets of $P_1$, so condition (2) holds.

If $P_2$ has $k$ facets, then it is also a chamber of $\Sigma$, and $P_2$ can be obtained from $P_1$ 
by reflecting in $\mu$. Thus, the number of facets of $P_0$ is the number of facets of $P_1$ that are 
orthogonal to $\mu$ plus twice the number of remaining facets of $P_1$ except $\mu\cap P_0$. Solving this
linear equation, we see that $k-2$ facets of $P_1$ are orthogonal to $\mu$, so  condition (1) holds.         

\end{proof}

\begin{remark}
Let $(G,S)$ be a Coxeter system satisfying condition $(1)$ of Lemma~\ref{comm}. 
If the Coxeter relation between $s_0$ and $s'$ (which is a unique generator not commuting with 
$s_0$) has an even exponent (or if there is no relation), then the first condition of Lemma~\ref{comm} 
is also sufficient: the set $\{S\setminus s_0,s_0s's_0\}$ is a set of standard generators for a subgroup of index $2$. 
However, in case of odd exponent the first condition is not sufficient: a group generated by reflections in sides of 
hyperbolic triangle with angles $(\pi/2,\pi/5,\pi/5)$ has no finite index reflection subgroups of rank $3$ 
(see, for example,~\cite{KS} or~\cite{F}). The same example shows that the second condition is not sufficient, either: 
any two standard generators of the group above generate a finite dihedral group. 

\end{remark}

\end{document}